\documentclass[10pt, a4paper]{amsart}

\usepackage[french,english]{babel}
\usepackage[T1]{fontenc}
\usepackage[utf8]{inputenc}
\usepackage{amsmath, amssymb, amsfonts, amsthm}
\usepackage[margin=2.0cm]{geometry}
\usepackage[]{setspace}
\usepackage[,dvipsnames]{xcolor}
\usepackage[,unicode]{hyperref}
\hypersetup{
    urlcolor=blue,
    colorlinks=true,
   linkcolor= blue,
   citecolor=blue
}
 \usepackage{comment, stmaryrd}
\usepackage[all]{xy}
\usepackage{libertine,libertinust1math}
\usepackage{bbm}

\usepackage{enumerate}

\theoremstyle{plain}
\newtheorem{theorem}{Theorem}[section]
\newtheorem*{introtheorem}{Main Theorem}

\newtheorem{lemma}[theorem]{Lemma}
\newtheorem{corollary}[theorem]{Corollary}
\newtheorem{claim}{Claim}[theorem]

\newtheorem{observation}[theorem]{Observation}

\theoremstyle{definition}
\newtheorem{example}[theorem]{Example}
\newtheorem{definition}[theorem]{Definition}
\newtheorem{question}[theorem]{Question}

\theoremstyle{remark}
\newtheorem{remark}[theorem]{Remark}

\newenvironment{claimproof}[1][Proof of claim]
{%
	\proof[#1]%
}
{%
	\endproof%
}

\renewcommand\square\blacksquare
\newcommand\AKEsquaretall\blacklozenge

\renewcommand\eth{{\rm\textbf{(\dh)}}}

\newcommand{\bfk}{\mathbf{k}} 
\newcommand{\Lang}{\mathfrak{L}} 
\newcommand{\Lring}{\Lang_{\mathrm{ring}}} 
\newcommand{\Lval}{\Lang_{\val}} 
\newcommand\Th{\mathrm{Th}}

\newcommand\Sent{\mathrm{Sent}}

\newcommand\PP{\mathbb{P}}
\newcommand\FF{\mathbb{F}}

\newcommand\ZZ{\mathbb{Z}}
\newcommand\QQ{\mathbb{Q}}
\newcommand\RR{\mathbb{R}}
\newcommand\CC{\mathbb{C}}

\newcommand\ps[1]{#1(\!(t)\!)}
\newcommand\psvar[2]{#1(\!(#2)\!)}
\newcommand\ips[1]{#1[\![t]\!]}

\newcommand\Px{\mathrm{Px}}

\newcommand\val{\mathrm{val}}

\newcommand\bH{\mathbf{H}} 
\newcommand\bHen{\bH^{\mathrm{e}\prime}} 
\renewcommand\H{\mathrm{H}}
\newcommand\Hen{\H^{\mathrm{e}\prime}} 
\renewcommand\int{\mathbf{int}}

\newcommand\RfourF[1]{\hyperref[R4_F]{{\rm(R4)$_{#1}$}}}

\newcommand\hyp{$(\star)$}

\newcommand\define[1]{{\bf #1}}
\newcommand\resp[1]{{\rm[}#1{\rm ]}}
\numberwithin{equation}{section}
\newcommand\la\langle
\newcommand\ra\rangle

\newcommand\F{\mathbf{F}}
\newcommand\rmF{\mathrm{F}}

\newcommand\ACF{\mathbf{ACF}}

\newcommand\sfT{\mathsf{T}} 
\newcommand\sfS{\mathsf{S}} 
\newcommand\sfH{\mathsf{H}} 
\newcommand\sfHe{\sfH^{e}} 

\newcommand\alg{\mathrm{alg}}

\newcommand\triv{\mathrm{triv}}

\newcommand\E{\mathrm{E}}

\newcommand\SAcorrection[1]{#1}

\makeatletter
\newenvironment{abstracts}{%
	\footnotesize
  \newcommand{\abstractin}[1]{%
	\setlength{\parindent}{12pt}
    \selectlanguage{##1}%
    \item[\hskip\labelsep\scshape\abstractname.]%
  }%
  \global\setbox\abstractbox=\vbox \bgroup
    \hsize\textwidth \advance\hsize-8pc
    \list{}{\labelwidth\z@ \leftmargin3pc \rightmargin\leftmargin}%
}{%
  \endlist\egroup
}
\makeatother

\title[]{\Large\rm A note on existentially t-henselian fields}
\author{Sylvy Anscombe}
\thanks{\today}
\address{Universit\'{e} Paris Cit\'{e}, Sorbonne Universit\'{e}, CNRS, IMJ-PRG, F-75013 Paris, France}
\email{sylvy.anscombe@imj-prg.fr}

\begin{document}
\begin{abstracts}
\abstractin{english}
A field is existentially t-henselian if it is has the same existential theory in the first-order language of rings
as a field that admits a nontrivial henselian valuation.
This property turns out to be equivalent to $\ZZ$-largeness, which is a property identified in previous work with Fehm,
and which holds for $F$ if and only if $t\ips{F}$ is not Diophantine in $\ps{F}$, without extra constants.

In this short note, we further investigate this property in order to count the number of existential theories of henselian valuations on a given field, and to find other characterizations of existential t-henselianity.

\medskip

\abstractin{french}

Un corps est existentiellement t-hensélien s'il possède la même théorie existentielle, dans le langage des anneaux du premier ordre, qu'un corps admettant une valuation hensélienne non triviale.
Cette propriété s'avère équivalente à ``$\ZZ$-largeness'', une propriété identifiée dans mes travaux antérieurs avec Fehm,
et qui est vérifiée pour $F$ si et seulement si $\ips{F}$ n'est pas diophantienne dans $\ps{F}$,
sans constantes supplémentaires.

Dans cette brève note, nous approfondissons cette propriété afin d'énumérer les théories existentielles des valuations henséliennes sur un corps donné, et de trouver d'autres caractérisations de la propriété de t-henselianité existentielle
\end{abstracts}
\maketitle

In this note we define a field to be ``existentially t-henselian'' if it is has the same existential theory in the first-order language of rings
as a field that admits a nontrivial henselian valuation.
This property turns up naturally in the study of existential theories of equicharacteristic henselian valuations.
Perhaps more surprisingly, 
it turns out to be also equivalent to ``$\ZZ$-largeness'', another property identified in previous work with Fehm,
namely~\cite{AF17},
and which holds for a field $F$ if and only if $t\ips{F}$ is definable in $\ps{F}$ by an existential formula in the language of rings, without parameters.

We prove several small results on existentially t-henselian fields.
As an example, we are able to count the number of existential theories of henselian valuations on a given field:

\begin{introtheorem}\label{thm:1}
Let $K$ be any field.
There are at most three existential theories $\Th_{\exists}(K,v)$ in the language of valued fields, for equicharacteristic henselian valuations $v$ on $K$.
\end{introtheorem}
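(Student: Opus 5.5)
I will rely on the Anscombe–Fehm description of existential theories of equicharacteristic henselian valued fields from \cite{AF17}. For nontrivial $v$, the existential theory $\Th_\exists(K,v)$ in $\Lval$ is determined by the existential theory of the residue field $Kv$ in $\Lring$; for the trivial valuation it is determined by $\Th_\exists(K)$ alone. More precisely, the result I will use is this. If $(K,v)$ is equicharacteristic henselian and nontrivial, then $\Th_\exists(K,v)=\Th_\exists(\ps{Kv},v_t)$, and this depends only on $\Th_\exists(Kv)$.

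The count then reduces to a statement about residue fields. I will split the valuations into three classes:
\begin{itemize}
\item[(a)] the trivial valuation, contributing one theory;
\item[(b)] nontrivial $v$ with $Kv$ existentially equivalent to $K$;
\item[(c)] nontrivial $v$ with $\Th_\exists(Kv)\neq\Th_\exists(K)$.
\end{itemize}
Class (b) contributes at most one theory. I then need to show that class (c) contributes at most one theory as well.

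The key structural input is the comparability of henselian valuations. If $v,w$ are nontrivial, equicharacteristic and henselian, then (unless $K$ is separably closed, where everything collapses) the henselian valuations form a chain in the canonical henselian valuation picture. In that case $Kw$ is a residue field of a henselian valuation on $Kv$, or vice versa.

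From this comparability I get a chain of existential inclusions. Suppose $w$ is finer than $v$, i.e.\ $\mathcal O_w\subseteq\mathcal O_v$. Then $Kv$ carries a henselian valuation $\bar w$ with residue field $Kw$. If $\bar w$ is nontrivial, then $Kv\equiv_\exists \ps{Kw}$, which gives
\[\Th_\exists(Kw)\subseteq\Th_\exists(Kv)\subseteq\Th_\exists(K).\]
Moreover, $K$ itself has $\Th_\exists(K)=\Th_\exists(\ps{Kv})$.

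The central claim is the following. If $F$ is existentially t-henselian, i.e.\ $F\equiv_\exists \ps{E}$ for some $E$ (which I will normalise using the $\ZZ$-largeness characterization), then $\Th_\exists(\ps{F})=\Th_\exists(F)$. I intend to prove it via the equivalence with $\ZZ$-largeness: $F$ is existentially closed in $\ps{F}$ when $F$ is large, and more generally $\Th_\exists(\ps{\ps{E}})=\Th_\exists(\ps{E})$. The latter follows because $\psvar{E}{t_1,t_2}$-type fields embed into an ultrapower of $\ps{E}$, using an embedding $\ps{\ps{E}}\to\ps{E}^*$ with $t\mapsto$ an infinitesimal element of higher rank.

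With this claim, the argument finishes as follows. If $Kv$ is existentially t-henselian, then
\[\Th_\exists(K)=\Th_\exists(\ps{Kv})=\Th_\exists(Kv),\]
so $v$ lies in class (b). Any $v$ in class (c) therefore has a residue field that is not existentially t-henselian. Combined with the chain above, two valuations $v,w$ in class (c) must give the same $\Th_\exists$ of residue fields: otherwise the coarser one's residue field would carry a nontrivial henselian valuation, making it existentially t-henselian. Hence (c) contributes at most one theory, and the total is at most three.

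The main obstacle is verifying the central claim rigorously, namely that $\Th_\exists(\ps{\ps{E}})=\Th_\exists(\ps{E})$ in $\Lring$. I will also need to handle the edge cases where the comparability of henselian valuations fails, namely separably closed $K$. In that case all residue fields are separably closed of the same characteristic, hence existentially equivalent, and there are only two theories.
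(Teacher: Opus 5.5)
Your architecture is the paper's, and your classes coincide with its cases {\bf(I)}--{\bf(III)}, since $K$ is henselian once $v$ is nontrivial. You split off the trivial valuation, use the Anscombe--Fehm theorem that $\Th_{\exists}(K,v)$ depends only on $\Th_{\exists}(Kv)$ for nontrivial $v$, and argue that at most one nontrivial $v$ can have $\Th_{\exists}(Kv)\neq\Th_{\exists}(K)$.

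The genuine gap is the comparability step. It is false that equicharacteristic henselian valuations on $K$ form a chain unless $K$ is separably closed. Take $K=\ps{\CC}$, which is not separably closed. Choose $x\in\CC$ transcendental, and extend the $x$-adic and $(x-1)$-adic valuations of $\QQ(x)$ to valuations $u_{1},u_{2}$ on $\CC$. These are incomparable and trivial on $\QQ$. They are henselian, as is every valuation on an algebraically closed field. Hence $u_{1}\circ v_{t}$ and $u_{2}\circ v_{t}$ are incomparable elements of $\sfHe(K)$. What is true is only this: two incomparable henselian valuations have a common coarsening with separably closed residue field, and that coarsening is nontrivial whenever $K$ is not separably closed. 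Your proof that class (c) has at most one member works by comparing two members of (c), so it has no basis in this situation.

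The missing idea is the paper's case {\bf(i)} of the Dichotomy theorem: split according to whether $\sfHe(K)$ is a chain, not according to whether $K$ is separably closed. Suppose $\sfHe(K)$ is not a chain. Then some equicharacteristic henselian valuation $u$ has separably closed residue field; for instance take the finest common coarsening of two incomparable members. So $\FF_{p}^{\alg}\subseteq Ku$, reading $\QQ^{\alg}$ when $p=0$. Hensel's lemma gives a partial section lifting $\FF_{p}^{\alg}$ into $K$. Every equicharacteristic valuation is trivial on $\FF_{p}^{\alg}$, so every residue field $Kv$ with $v\in\sfHe(K)$ contains $\FF_{p}^{\alg}$. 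Hence $\Th_{\exists}(Kv)=\ACF_{p,\exists}=\Th_{\exists}(K)$, and your class (c) is empty. With this case distinction in place, your chain argument handles the remaining case correctly.

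Separately, the step you call the main obstacle is not one. The fields $(\ps{\psvar{E}{s}},v_{s}\circ v_{t})$ and $(\ps{E},v_{t})$ are both nontrivial equicharacteristic henselian with residue field $E$. So the theorem you already assume gives $\Th_{\exists}(\ps{\psvar{E}{s}})=\Th_{\exists}(\ps{E})$ immediately. More directly, if $w$ is strictly finer than a nontrivial $v$, then $(K,w)$ and $(Kv,\bar{w})$ share the residue field $Kw$, so $\Th_{\exists}(K)=\Th_{\exists}(Kv)$. No ultrapower embedding or appeal to largeness is needed.
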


This theorem follows from Theorem~\ref{thm:1b}.

\section{Canonical valuations}

    For each field $K$, let 
	$\sfS_{\val}(K)$
	be the
	partially ordered set%
	\footnote{This is the Riemann--Zariski space of $K$ when suitably topologized.}
	of valuations of $K$,
	where the ordering corresponds to the usual notions of coarsenings and refinements of valuations.
	More precisely, 
	given valuations $v,w$ on $K$, we say that
$v$ is \define{finer} than $w$ (and that $w$ is \define{coarser} than $v$), denoted $v\leq w$,
if $\mathcal{O}_{v}\subseteq\mathcal{O}_{w}$.

	We note the following properties:
	\begin{enumerate}[{\bf(i)}]
	\item
		$\sfS_{\val}(K)$ is directed upwards, over sets,
		i.e.~every
		non-empty
		subset has a
       least upper bound. 
		In particular there is a maximum valuation, the trivial valuation, which is denoted $v_{K}^{\triv}$.
	\item
		The set of coarsenings of a given valuation is a chain, i.e.~a total order.
	\item
		Every chain has an infimum.
	\end{enumerate}
A valuation 
$v\in\sfS_{\val}(K)$ on $K$ is \define{henselian} if
for every finite extension $L/K$
there is precisely one $w\in\sfS_{\val}(L)$ that restricts to $v$.
Let $\sfH(K)$ be the partially ordered set of henselian valuations on $K$,
and we write $\sfH_{2}(K):=\{v\in\sfH(K)\mid\text{$Kv$ is separably closed}\}$
and $\sfH_{1}(K):=\sfH(K)\setminus\sfH_{2}(K)$.
	Then
	\begin{itemize}
	\item
		$\sfH_{2}(K)<\sfH_{1}(K)$, i.e.~every $v_{1}\in\sfH_{1}(K)$ is finer than every $v_{2}\in\sfH_{2}(K)$.
	\item
		$\sfH_{1}(K)$ is closed upwards,
		is a chain,
		and it has a minimum if $\sfH_{2}(K)$ is empty.
	\item
		$\sfH_{2}(K)$ is closed downwards,
		is not in general a chain,
		and it has a maximum if and only if it is nonempty.
	\end{itemize}
	The \define{canonical henselian valuation} $v_{K}$ is the supremum of $\sfH_{2}(K)$.
	Note that if $\sfH_{2}(K)\neq\emptyset$, then $v_{K}=\max\sfH_{2}(K)$;
	whereas if $\sfH_{2}(K)=\emptyset$, then $v_{K}=\min\sfH_{1}(K)$.
	A field $K$ itself is \define{henselian} if it admits a nontrivial henselian valuation,
	and it is \define{t-henselian} if some elementary extension $K^{*}\succeq K$ is henselian.
	It is a small part of the work of Jahnke and Koenigsmann,
	indeed it follows from Beth's Definability Theorem,
	that 
	if $Kv_{K}$ is not t-henselian, then $v_{K}$ is $\Lring$-definable without parameters.
	There are plenty of examples of t-henselian fields that are not henselian,
	including $\RR$ and $\FF_{p}^{\alg}$, and more besides.
	We observe also that henselianity (of a valuation) is $\Lval$-axiomatizable, but not finitely so.

	A similar game can be played with $p$-henselian valuations, those that extend uniquely to the $p$-closure, as was explored by Koenigsmann, and later by Jahnke and Koenigsmann together:

\begin{example}[{$p$-henselian valuations, Jahnke--Koenigsmann}]
Let $p\in\mathbb{P}$ be a prime number.
For a field $K$,
	we let $\sfH^{p}(K)$ be the
partially ordered set
	of $p$-henselian valuation rings on $K$.
Let
$\sfH^{p}(K)$
be the set of $p$-henselian valuations on $K$.
We write
$\sfH^{p}_{1}(K)=\{v\in\sfH^{p}(K)\mid Kv\neq Kv(p)\}$
and
$\sfH^{p}_{2}(K)=\{v\in\sfH^{p}(K)\mid Kv=Kv(p)\}$.
Then
	\begin{itemize}
	\item
	$\sfH_{2}^{p}(K)<\sfH_{1}^{p}(K)$,
	i.e.~every $v_{1}\in\sfH_{2}^{p}(K)$ is strictly finer than every $v_{2}\in\sfH_{1}^{p}(K)$.
	\item
	$\sfH_{1}^{p}(K)$ is closed upwards,
	is a chain,
	and it has a minimum if $\sfH_{2}(K)$ is empty.
	\item
	$\sfH_{2}^{p}(K)$ is closed downwards,
	in particular under minima of chains,
	but is not (in general) a chain nor closed upwards.
	\end{itemize}
In contrast to the situation with henselianity, $p$-henselianity is finitely axiomatizable.
and correspondingly membership of a valuation in each of $\sfH_{1}^{p}$ and $\sfH_{2}^{p}$ is finitely $\Lval$-axiomatizable.
The \define{canonical $p$-henselian valuation} $v_{K}^{p}$ is the supremum of $\sfH_{2}^{p}(K)$.
Note that if $\sfH_{2}^{p}(K)\neq\emptyset$, then $v_{K}^{p}=\max\sfH_{2}^{p}(K)$;
whereas if $\sfH_{2}^{p}(K)$, then $v_{K}^{p}=\min\sfH_{1}^{p}(K)$.
	As in the henselian case, if $Kv_{K}^{p}$ is not $p$-henselian then $v_{K}^{p}$ is $\Lring$-definable,
	by Beth's Definability Theorem.
\end{example}

\begin{example}[{Canonical tame valuation, Szewczyk}]
	Let $\sfT(K)$ be the partially ordered set of tame valuation rings on $K$,
	and let 
	$\sfT_{1}(K)=\sfT(K)\cap\sfH_{1}(K)$
	and
	$\sfT_{2}(K)=\sfT(K)\cap\sfH_{2}(K)$.
	As in the previous cases, there is an $\Lring$-elementary class of \define{t-tame} fields, which are those fields $k$ that are $\Lring$-elementarily equivalent to a field $k'$ admitting a nontrivial tame valuation.
	We denote by $v_{K}^{T}$
	the \define{canonical tame valuation}, which by definition is the supremum of $\sfT_{2}(K)$.
    This well-defines an element of $\sfT(K)$ by work of Szewczyk, see \cite{Szewczyk}.
	So if $\sfT_{2}(K)\neq\emptyset$, then $v_{K}^{T}=\max\sfT_{2}(K)$,
	whereas if $\sfT_{2}(K)=\emptyset$, then $v_{K}^{T}=\min\sfT_{1}(K)$.
	Just as in the henselian case, and in contrast to the $p$-henselian case, tameness is not finitely axiomatizable.
    The following result is immediate from Beth's Definability Theorem, applied in this context.
	\begin{theorem}
		If $Kv_{K}^{T}$ is not t-tame, then $v_{K}^{T}$ is $\Lring$-definable.
	\end{theorem}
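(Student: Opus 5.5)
We follow the standard Beth-definability route, as in the henselian case attributed to Jahnke and Koenigsmann. Work in the language $\Lring\cup\{\mathcal{O}\}$ with a new unary predicate. We write a theory $T$ whose models $(K',\mathcal{O}')$ are exactly those pairs where $\mathcal{O}'$ is the valuation ring of the canonical tame valuation $v_{K'}^{T}$. By Beth's Definability Theorem, if every field has at most one expansion to a model of $T$, then $\mathcal{O}$ is $\Lring$-definable without parameters modulo $T$. The defining formula then defines $\mathcal{O}_{v_K^T}$ in $K$.

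Since "$v_{K'}^{T}$ is canonical" is not elementary in general, we restrict $T$ using the hypothesis. Let $T$ consist of the following axioms:
\begin{enumerate}[(a)]
\item $\mathcal{O}$ is a tame valuation ring, which is an infinite but elementary axiom scheme.
\item $\Th_{\Lring}(Kv_K^T)$, relativized to the residue field $\mathcal{O}/\mathfrak{m}$.
\end{enumerate}
Since $Kv_K^T$ is not t-tame, no model of its theory is t-tame. In particular the residue field of any model of $T$ admits no nontrivial tame valuation. We also note that $Kv_K^T$ is not separably closed, since separably closed fields are t-tame, being elementarily equivalent to fields with nontrivial tame valuations such as $\ps{F}^{\alg}$-type examples. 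Hence $v_K^T\in\sfT_1(K)$.

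The key step, and the main obstacle, is uniqueness: if $(K',\mathcal{O}_1)$ and $(K',\mathcal{O}_2)$ are both models of $T$, then $\mathcal{O}_1=\mathcal{O}_2$. Both residue fields are not separably closed, so both valuations lie in $\sfT_1(K')\subseteq\sfH_1(K')$, which is a chain. Suppose $v_1<v_2$ strictly. Then $v_1$ induces a nontrivial valuation $\bar v$ on $K'v_2$ with residue field $K'v_1$. We need $\bar v$ to be tame; this follows from the standard decomposition fact that a composite valuation is tame if and only if both its coarsening and the induced valuation on the residue field are tame, which we would cite from Szewczyk or from Kuhlmann's theory of tame fields. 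Then $K'v_2$ carries a nontrivial tame valuation, hence is t-tame. This contradicts axiom (b) for $\mathcal{O}_2$. By symmetry $\mathcal{O}_1=\mathcal{O}_2$.

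Finally, $(K,\mathcal{O}_{v_K^T})\models T$, so Beth's theorem yields an $\Lring$-formula $\varphi(x)$ with $T\vdash\forall x\,(\mathcal{O}(x)\leftrightarrow\varphi(x))$. Therefore $\varphi$ defines $\mathcal{O}_{v_K^T}$ in $K$. We never needed to know that the unique tame ring in a model of $T$ is that model's canonical one; implicit definability only requires uniqueness of the expansion. The only delicate inputs are the elementarity of tameness, which the paper notes is axiomatizable though not finitely, and the behaviour of tameness under composition.
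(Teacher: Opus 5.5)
Your proof is correct and is essentially the argument the paper means when it calls the result ``immediate from Beth's Definability Theorem'': you show that ``$\mathcal{O}$ is tame with residue field $\equiv Kv_K^T$'' implicitly defines $\mathcal{O}$. Uniqueness holds because any two such rings lie in the chain $\sfH_{1}(K')$, and a strict refinement would give the coarser residue field a nontrivial tame valuation. Two statements need tightening. First, tameness is not closed under composition (see the paper's own footnote), so cite only the true direction: a tame valuation induces a tame valuation on the residue field of any coarsening. Second, imperfect separably closed fields are never t-tame, so argue instead that $Kv_K^T$ is perfect, hence algebraically closed if separably closed, and therefore t-tame.
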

    This theorem does not appear explicitly in either of
    {\cite{KRS,Szewczyk}}, but is closely related.
\end{example}

\begin{example}
		\define{Divisible-tame valuations}
		are those tame valuations which also have a divisible value group.
		The natural definition is that $k$ is \define{t-divisible-tame} if there exists $k^{*}\equiv k$ that admits a nontrivial divisible-tame valuation.
		It was shown in \cite[Lemma 5.1]{AJ} that if $k$ is t-divisible-tame then $k\preceq k((\QQ))$.
		It was also shown in \cite{AJ} that for each $p\in\mathbb{P}\cup\{0\}$, there exists a t-divisible-tame field $k$ of characteristic $p$ that is not henselian, not real closed, and not separably closed.
\end{example}

Similar observations hold for ``canonical valuations'' in many other related settings.

\begin{remark}
Many other properties of valuations that appear in the literature behave similarly.
The properties henselianity, $p$-henselianity, tameness\footnote{Tameness is not closed under composition, since the composition of a tame valuation of mixed characteristic $(0,p)$ with, for example, a discretely valued henselian valuation of equal characteristic zero is not tame. The closure of the class of tame valued fields under composition is the class of roughly tame fields.}, and divisible tameness, that appear in the preceding examples,
are well-behaved in several key respects.
To begin with, each of these properties is $\Lval$-axiomatizable, 
and most are closed under composition and decomposition.
Further examples of properties $P$ for valued fields $(K,v)$ satisfying these basic requirements are:
\begin{itemize}
\item
$v$ is algebraically maximal,
\item
$v$ is henselian and defectless,
\item
$v$ is roughly tame,
\item
the value group $vK$ is roughly $p$-divisible,
\item
the value group $vK$ is roughly divisible,
\item
$v$ is defectless and a coarsening of a given valuation $w$.
\end{itemize}
\end{remark}

We need not constrain our attention to $\Lval$-axiomatizability.
By working in a language of bi-valued fields, we may consider structures $(K,v,w)$ in which $v$ coarsens $w$.
Thus the chain $\sfH_{1}$ is replaced by the chain of coarsenings of each $w$.

\begin{example}
	By \cite[Lemma 3.4]{AJ24},
	if $(K,w)$ is NIP
	and of residue characteristic $p>0$,
	then $w$ has at most one coarsening $v$ with an imperfect residue field.
	Beth's Definability Theorem then implies that $v$ is definable in $(K,w)$.
There is also an explicit formula known, which is due to Scanlon.
\end{example}

All of the above are examples of the following theorem of Ketelsen, from her thesis:

\begin{theorem}[{Ketelsen, \cite[Theorem 4.2.4]{Ketelsen_thesis}}]\label{thm:Marga}
Let $P$ be a property of (possibly enriched) valued fields,
	and $P^{*}$ be a property of valued fields such that
	\begin{enumerate}[{\bf(i)}]
		\item
		$P$ is preserved under $\Lval'$-elementary equivalence, where $\Lval'=\Lang'\cup\{\mathcal{O}\}$ for some enrichment $\Lang'\supseteq\Lang$.
		\item
		$P^{*}$ is first-order axiomatizable in $\Lval$.
		\item
		For any field $F$ and valuations $u$ and $w$ on $F$ that have property $P$:
		if $u\leq w$ and $\bar{u}$ is the valuation induced by $u$ on $Fw$ then $(Fw,\bar{u})$ has $P^{*}$.
	\end{enumerate}
	Now let $(K,v)$ be such that $v\in\sfH_{1}(K)$, $(K,v)$ satisfies $P$ and $Kv$ is not t-henselian of type $P^{*}$.
	Then $\mathcal{O}_{v}$ is $\Lang'$-definable.
\end{theorem}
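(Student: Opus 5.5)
We prove Ketelsen's theorem by Beth's Definability Theorem. We show that $\mathcal{O}_{v}$ is implicitly defined by an $\Lang'$-theory, and then convert implicit definability into explicit definability. Let $T$ be the $\Lang'$-elementary diagram (or just the complete $\Lang'$-theory) of $K$, expanded by a new unary predicate $\mathcal{O}$. Add the axioms expressing that: $\mathcal{O}$ is a henselian valuation ring; its residue field is not separably closed (so it lies in $\sfH_{1}$); and $(K,\mathcal{O})$ satisfies $\Th_{\Lval'}(K,v)$. This last clause carries $P$, by (i). Since $\Th_{\Lval'}(K,v)$ already contains all of this, we simply take $T^{+}:=\Th_{\Lval'}(K,v)$. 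By Beth, it suffices to show: for any model $K^{*}\models\Th_{\Lang'}(K)$ and any two expansions $(K^{*},\mathcal{O}_{1}),(K^{*},\mathcal{O}_{2})\models T^{+}$, we have $\mathcal{O}_{1}=\mathcal{O}_{2}$. Beth then yields an $\Lang'$-formula $\varphi$ with $T^{+}\vdash\forall x(\mathcal{O}(x)\leftrightarrow\varphi(x))$, and in particular $\varphi$ defines $\mathcal{O}_{v}$ in $K$.

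To prove uniqueness, fix such $K^{*}$ with valuations $v_{1},v_{2}$. Both are henselian and lie in $\sfH_{1}(K^{*})$, since non-separable-closedness of the residue field is elementary. By the structure recalled above, $\sfH_{1}(K^{*})$ is a chain, so we may assume $v_{1}\leq v_{2}$. Both have $P$ by (i), since each $(K^{*},v_{i})\equiv_{\Lval'}(K,v)$. Suppose for contradiction that $v_{1}<v_{2}$. Then the induced valuation $\bar{v}_{1}$ on $K^{*}v_{2}$ is nontrivial, and it is henselian because $v_{1}$ is henselian and decomposes as $v_{2}$ followed by $\bar{v}_{1}$. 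By (iii), $(K^{*}v_{2},\bar{v}_{1})$ has $P^{*}$. Hence $K^{*}v_{2}$ admits a nontrivial henselian valuation with $P^{*}$. Now $K^{*}v_{2}$ is $\Lring$-elementarily equivalent to $Kv$, because $(K^{*},v_{2})\equiv(K,v)$ and the residue field is interpretable. So $Kv$ is elementarily equivalent to a field carrying a nontrivial henselian valuation of type $P^{*}$, i.e.\ $Kv$ is t-henselian of type $P^{*}$. This uses (ii) to make sense of that notion elementarily. It contradicts the hypothesis, so $v_{1}=v_{2}$.

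The main obstacle is the bookkeeping around Beth's theorem. We must ensure that the implicit definition is relative to a theory in which the base language is $\Lang'$ and the only new symbol is $\mathcal{O}$. We must also ensure that the uniqueness argument works in arbitrary models of that theory, not just in $K$. The hypotheses are arranged exactly so that every property used transfers under $\Lval'$-elementary equivalence: henselianity, membership in $\sfH_{1}$, $P$, and the residue field's theory. The one delicate point is the phrase ``t-henselian of type $P^{*}$''. We should read it as ``elementarily equivalent to a field admitting a nontrivial henselian valuation with $P^{*}$'', and then the contradiction above is immediate.
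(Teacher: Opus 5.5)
Your proposal is correct and follows the intended route. The paper gives no proof of its own and only cites Ketelsen's thesis, but it repeatedly attributes such statements to Beth's Definability Theorem. Your uniqueness argument is exactly how that goes: comparability of $v_1,v_2$ in the chain $\sfH_{1}(K^{*})$, then (iii) applied to $v_1<v_2$, produces a nontrivial henselian valuation $\bar v_1$ with $P^{*}$ on $K^{*}v_2\equiv Kv$.

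One small remark: under your reading of ``t-henselian of type $P^{*}$'' (via elementary equivalence), hypothesis (ii) is never actually used. It is only needed if the notion is phrased via elementary extensions, as t-henselianity is in the paper. In that case (ii) transfers $P^{*}$ to a sufficiently saturated elementary extension of $(K^{*}v_2,\bar v_1)$ into which $Kv$ embeds elementarily.
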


\section{Diophantine henselian valuation rings and valuation ideals}

In this section we consider fields $K$
and equicharacteristic henselian nontrivial valuations $v$ on $K$, with corresponding valuation rings $\mathcal{O}$.
The paper~\cite{FJ} gives a survey of the results around the subject of $\Lring$-definable henselian valuation rings.
\SAcorrection{A subset of a field $K$ is \define{Diophantine} if it is of the form
\begin{align*}
	\{x\in K\mid \exists y_{1},\ldots\exists y_{n}\in K\colon f_{1}(x,y_{1},\ldots,y_{n})=\ldots=f_{m}(x,y_{1},\ldots,y_{n})=0\},
\end{align*}
where $f_{1},\ldots,f_{m}\in\ZZ[X,Y_{1},\ldots,Y_{m}]$.
Equivalently, a subset of $K$ is Diophantine if it is defined by an existential $\Lring$-formula.
This notion of ``Diophantine'' disallows parameters, i.e.~the polynomials $f_{i}$ are over $\ZZ$.}

In \cite{AF17}, the following definition was introduced.

\begin{definition}[{\cite[Definition 3.5]{AF17}}]\label{def:ER_large}
	Let $\mathcal{C}$ be a class of fields.
	\begin{enumerate}[{\bf(i)}]
		\item
	We say that $\mathcal{C}$ has \define{embedded residue}
	if there exist $k_{1},k_{2}\in\mathcal{C}$, 
	a nontrivial valuation $v$ on $k_{1}$,
	and an embedding of rings $k_{1}v\rightarrow k_{2}$.
	We say that a field $k$ has \define{embedded residue} if $\{k'\mid k'\equiv k\}$ has embedded residue.
		\item
			We say that $\mathcal{C}$ is \define{$\ZZ$-large}\footnote{\SAcorrection{The paper~\cite{AF17} works in the context of ``$C$-fields'', that is fields $F$ equipped with a distinguished morphism $C\rightarrow F$, for a given integral domain $C$.
			In the present note we are always working simply in the case $C=\ZZ$.
			What we here call $\ZZ$-large is there simply called ``large''.}}
	if there exist $k_{1},k_{2}\in\mathcal{C}$, a subfield $E\subseteq k_{1}$,
	and a nontrivial henselian valuation $v$ on $E$ with $Ev\cong k_{2}$.
	We say that a field $k$ is \define{$\ZZ$-large} if $\{k'\mid k'\equiv k\}$ is $\ZZ$-large.
	\end{enumerate}
\end{definition}

There is an underlying duality between the
existential
definability of henselian valuation rings
and valuation ideals, which is somewhat twisted by the appearance of
henselianity
in the definition of $\ZZ$-largeness.
The following theorem of \cite{AF17} characterizes (among other things) when $\ips{F}$ \resp{respectively $t\ips{F}$} is Diophantine in $\ps{F}$.

\begin{theorem}[{\cite[Theorem 1.1]{AF17}}]\label{thm:intro_AF17}
Let $F$ be a field.
Then the following are equivalent.
	\begin{enumerate}[{\bf(i)}]
		\item
			There is an $\exists$-$\Lring$-formula that defines $\mathcal{O}_{v}$
			\resp{respectively, $\mathfrak{m}_{v}$}
			in $K$ for some equicharacteristic henselian nontrivially valued field
			$(K,v)$ with residue field $F$.
		\item
			There is an $\exists$-$\Lring$-formula that defines $\mathcal{O}_{v}$
			\resp{respectively, $\mathfrak{m}_{v}$}
			in $K$ for every henselian valued field
			$(K,v)$ with residue field elementarily equivalent to $F$.
		\item
			There is no elementary extension $F\preceq F^{*}$ with a nontrivial valuation $v$
			on $F^{*}$ for which the residue field $F^{*}v$ embeds into $F^{*}$
			\resp{respectively, with a nontrivial henselian valuation $v$ on
			a subfield $E$ of $F^{*}$ with $Ev\cong F^{*}$}.
	\end{enumerate}
\end{theorem}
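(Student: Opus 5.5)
I would prove the equivalences by going around the cycle (ii)$\Rightarrow$(i)$\Rightarrow$(iii)$\Rightarrow$(ii). I treat the two versions together: the $\mathcal{O}_{v}$ version is matched with ``embedded residue'' in (iii), and the $\mathfrak{m}_{v}$ version with $\ZZ$-largeness.

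\textbf{(ii)$\Rightarrow$(i).} This is immediate by taking $(K,v)=(\ps{F},v_{t})$, which is henselian, equicharacteristic and nontrivially valued with residue field $F$.

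\textbf{(i)$\Rightarrow$(iii), by contraposition.} Suppose $F^{*}\succeq F$ carries a nontrivial $v$ with $F^{*}v\hookrightarrow F^{*}$. Let $(K,w)$ be as in (i), and let $\varphi$ be an existential formula defining $\mathcal{O}_{w}$ in $K$. Using the Ax--Kochen/Ershov principle for existential theories in equal characteristic (existential theories of henselian fields depend only on the existential theory of the residue field), I may replace $(K,w)$ by $(\ps{F^{*}},v_{t})$ while keeping the existential sentences expressing ``$\varphi$ defines a subring containing $\mathcal{O}$ and avoiding $t^{-1}$''. In a suitable elementary extension one then builds a composite valuation $v_{t}\circ v$ on $\ps{F^{*}}$. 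Its residue field embeds back into $F^{*}$, so a place argument transports witnesses: some element outside $\mathcal{O}$ is forced to satisfy $\varphi$, a contradiction. The ideal version is analogous, with $\neg\varphi$ failing on some unit. There the henselianity of $v$ on $E$ is used to lift, by Hensel's lemma, solutions from $Ev\cong F^{*}$ into $E\subseteq F^{*}$.

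\textbf{(iii)$\Rightarrow$(ii).} This is the main obstacle. I would argue by compactness. Suppose no existential formula defines $\mathcal{O}$ uniformly across the henselian fields with residue field $\equiv F$. Then, for each existential $\varphi$ that is satisfied by every element of $\mathcal{O}$ (uniformly), there is a model containing an element $x$ with $v(x)<0$ satisfying $\varphi$. Hence by compactness there is a henselian $(K,v)$, with $Kv\equiv F$, and an element $x\notin\mathcal{O}$ whose existential type is contained in that of elements of $\mathcal{O}$. Realizing this inside a saturated model gives an embedding of $\ZZ[x]$-configurations, namely a ring map from a field generated over $x^{-1}\in\mathfrak{m}$ to some $\mathcal{O}$. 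Composing with residue maps then produces a nontrivial valuation on a subfield of $F^{*}$ whose residue field embeds into $F^{*}$. In the ideal case the construction produces a henselian valuation, by passing to henselizations, with residue field $\cong F^{*}$. Getting the residue embedding, and not merely an existential-type inclusion, is the delicate point. I would handle it by carefully choosing saturated elementary extensions and applying the existential Ax--Kochen/Ershov embedding lemma.
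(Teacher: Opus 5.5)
The note only quotes this theorem from \cite{AF17}, so there is no in-paper proof to compare against. Judged on its own, your (ii)$\Rightarrow$(i) is fine, but both non-trivial directions have genuine gaps.

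\textbf{(i)$\Rightarrow$(iii).} You cannot pass to $(\ps{F^{*}},v_{t})$ while ``keeping'' the fact that $\varphi$ contains $\mathcal{O}$. The statement $\forall x\,(\mathcal{O}(x)\rightarrow\varphi(x))$ is a $\forall\exists$-sentence, and ``avoiding $t^{-1}$'' involves a parameter. Theorem~\ref{thm:AF} transfers only existential sentences and their negations, so only the inclusion $\varphi\subseteq\mathcal{O}$, i.e.\ $\neg\exists x\,(\varphi(x)\wedge\neg\mathcal{O}(x))$, survives. Such $\forall\exists$-statements genuinely differ between fields with the same residue field: ``every $x$ or $-x$ is a sum of four squares'' holds in $\QQ(\!(t^{\QQ})\!)$ but fails at $t$ in $\ps{\QQ}$.

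What works is to stay in an elementary extension $(K^{*},w^{*})\succeq(K,w)$, where $\varphi(K^{*})=\mathcal{O}_{w^{*}}$ by elementarity. Arrange (by a joint-consistency or saturation argument) that $K^{*}w^{*}$ itself carries a non-trivial $v$ with $(K^{*}w^{*})v\hookrightarrow K^{*}w^{*}$. Take $u=v\circ w^{*}$ and a $w^{*}$-unit $a$ with $v(\bar a)<0$. Then $\varphi(a)$ holds and $a\notin\mathcal{O}_{u}$. Both facts persist in the henselization $(K^{*h},u^{h})$, which you must pass to because $u$ need not be henselian. Since $\Th_{\exists}((K^{*}w^{*})v)\subseteq\Th_{\exists}(F)$, Theorem~\ref{thm:AF} puts $\exists x\,(\varphi(x)\wedge\neg\mathcal{O}(x))$ into $\Th_{\exists}(K,w)$. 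Your ``place argument'' goes the wrong way: places move witnesses down to a residue field, not up.

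The ideal case is not analogous; its mechanism is a coarsening, not a refinement. The data $(E,v)$ serve only to give $\Th_{\exists}(F)=\Th_{\exists}(\ps{F})$: your Hensel lifting plus $E\subseteq F^{*}$ gives $\Th_{\exists}(E)=\Th_{\exists}(F)$, and then Lemma~\ref{lem:eth_1} applies. Next take a proper non-trivial coarsening $w'$ of $w^{*}$, which exists once $(K^{*},w^{*})$ is $\aleph_{1}$-saturated. Then $\Th_{\exists}(K^{*},w')=\Th_{\exists}(K,w)$, while elements of $\mathfrak{m}_{w^{*}}\setminus\mathfrak{m}_{w'}$ are $w'$-units satisfying $\varphi$.

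\textbf{(iii)$\Rightarrow$(ii).} Your compactness step has the inclusion reversed. You need some $a\in\mathcal{O}_{v_{1}}$ whose existential type is contained in that of $x\notin\mathcal{O}_{v_{2}}$; in a saturated model this yields a field embedding $f\colon K_{1}\to K_{2}$ with $f(a)=x$. Put $\mathcal{O}_{u}:=f^{-1}(\mathcal{O}_{v_{2}})$. The residue embedding $K_{1}u\hookrightarrow K_{2}v_{2}$ is then automatic, so that is not the delicate point. The delicate point, which your sketch never addresses, is how $u$ sits relative to $v_{1}$.

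\begin{itemize}
\item If $u$ properly refines $v_{1}$, the induced valuation on $K_{1}v_{1}\equiv F$ gives embedded residue directly.
\item But $u$ can be incomparable with $v_{1}$, and this needs a further idea. Incomparability gives some $m\in\mathfrak{m}_{v_{1}}$ with $u(m)<0$. For each monic $g$ over the prime ring, Hensel's lemma for $v_{1}$ gives a root $\xi\in\mathfrak{m}_{v_{1}}$ of $X+mg(X)$. Comparing $u$-values forces $u(\xi)\geq0$ and $g(\xi)=-\xi/m\in\mathfrak{m}_{u}$. So $K_{1}u\subseteq K_{2}v_{2}$, and hence $F$, contains the algebraic closure of the prime field, which does force embedded residue.
\end{itemize}

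Moreover, your stated target, a non-trivial valuation on a \emph{subfield} of $F^{*}$ whose residue field embeds into $F^{*}$, is not the negation of (iii) and is strictly weaker. A regular PAC extension $F$ of $\QQ(x)$ has such a valuation (the $x$-adic one on $\QQ(x)$), yet $\ips{F}$ is Diophantine in $\ps{F}$ by \cite{Feh15}. The valuation has to live on a model of $\Th(F)$, after which \cite[Lemma 3.7]{AF17} passes to a single $F^{*}$.

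The ideal case has the same gap. There the henselian valuation is the one induced by $v_{1}$ on $K_{1}u$ when $u$ is a proper coarsening of $v_{1}$. In the incomparable case the Hensel trick again puts the algebraic closure of the prime field inside $F$, hence $\ZZ$-largeness by Lemma~\ref{lem:eth_1}. Henselizing cannot substitute for this: henselization is immediate and need not stay inside $F^{*}$, so it cannot produce residue field $\cong F^{*}$.
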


\begin{remark}
	By \cite[Lemma 3.7]{AF17},
	condition {\bf(iii)} is equivalent to 
	\begin{enumerate}[{\bf(i)}]
			\setcounter{enumi}{3}
		\item[{\bf(iii')}]
			The field $F$ does not have embedded residue \resp{respectively, is not $\ZZ$-large}.
	\end{enumerate}
\end{remark}

We give several negative examples, i.e.~examples of $\mathcal{O}$ that are not Diophantine in $K$.

\begin{example}[{Negative examples}]
\
	\begin{enumerate}[{\bf(i)}]
	\item
		$\ips{\CC}$ is not Diophantine in $\ps{\CC}$:
		this folkloric result is explained in \cite[Appendix A]{AK14}.
	\item
		$\ips{\QQ_{p}}$ is not Diophantine in $\ps{\QQ_{p}}$:
		this is also explained in \cite[Appendix A]{AK14}.
	\item
		$\ips{\RR}$ is not Diophantine in $\ps{\RR}$: this is a similar direct limit argument.
			Any existential formula defining $\ips{\RR}$ in $\ps{\RR}$ must also define the (nontrivial) valuation ring $\ips{\RR}^{\Px}$ in the Puiseux series $\ps{\RR}^{\Px}$, but this is a real closed field.
	\end{enumerate}
\end{example}

These examples generalize to the following.

\begin{example}
	$\ips{\FF}$ is not Diophantine in $\ps{\FF}$, for any algebraically closed field $\FF$.
\begin{proof}
	To see this, we again give a direct limit argument:
	in the algebraic closure of $\ps{\FF}$
	the unique prolongation of $\ips{\FF}$ is again nontrivial and must be defined by any existential formula defining $\ips{\FF}$ in $\ps{\FF}$ since the algebraic closure is a direct limit of isomorphic copies of $\ps{\FF}$.
\end{proof}
\end{example}

The arguments here can be easily extended to show that $\ips{F}$ is not Diophantine in $\ps{F}$ whenever $F$ is a characteristic zero t-henselian field, i.e.~elementarily equivalent to one admitting a nontrivial henselian valuation.
In fact, as the main theorem will show, the characteristic assumption may be removed.

Turning to positive examples, we have the following.

	\begin{example}[{Positive examples}]
	\
	\begin{enumerate}[{\bf(i)}]
	\item
		$\ips{\FF_{q}}$ is Diophantine in $\ps{\FF_{q}}$ for all prime powers $q$ (\cite{AK14}).
	\item
		$\ips{F}$ is Diophantine in $\ps{F}$, for $F$ a PAC field not containing the algebraic closure of its prime subfield (\cite{Feh15}).
	\item
		$\ips{\QQ}$ is Diophantine in $\ps{\QQ}$ (\cite{AF17}).
	\end{enumerate}
	Each of these can be seen in a rather concrete fashion, with explicit formulas.
	\end{example}

\begin{remark}\label{rem:C}
Strictly speaking, the framework of \cite{AF17} was that of
\define{$C$-fields}: which are fields $F$ equipped with distinguished morphisms $C\rightarrow F$,
for a given integral domain $C$.
In the present note we are always working simply with the case $C=\ZZ$.
\end{remark}

\section{Existentially t-henselian fields}

For a field $K$,
we denote by
	\SAcorrection{$\sfHe(K)$}
the set of (equivalence classes of)
equicharacteristic henselian
valuations on $K$,
partially ordered by the relation of refinement/coarsening,
with largest element $v^{\rm triv}$, the trivial valuation on $K$.
Let $\E$ denote the $\Lring$-fragment $\Sent_{\exists}(\Lring)$.
Let $\bHen$ denote the $\Lval$-theory of equicharacteristic henselian nontrivially valued fields,
and let $\bHen(R)$ denote $\bHen$ together with axioms that impose that $R$ holds on the residue field, for any $\Lring$-theory $R$.
	\SAcorrection{%
The following theorem
from~\cite{AF26}---though it is based on~\cite{AF16}---%
will be used several times in this section.

	\begin{theorem}[{\cite[Corollary 3.19(a)(III)]{AF26}}]\label{thm:AF}
	Let $(K,v)\models\bHen$.
	Then
	$\Th_{\exists}(K,v)=\bHen(\Th(Kv))_{\exists}=\bHen(\Th_{\exists}(Kv))_{\exists}$.
	\end{theorem}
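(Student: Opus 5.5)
The inclusions
\[
\bHen(\Th_{\exists}(Kv))_{\exists}\subseteq\bHen(\Th(Kv))_{\exists}\subseteq\Th_{\exists}(K,v)
\]
are immediate. The first holds because $\Th_{\exists}(Kv)\subseteq\Th(Kv)$. The second holds because $(K,v)$ is itself a model of $\bHen(\Th(Kv))$. So the plan is to prove the remaining inclusion $\Th_{\exists}(K,v)\subseteq\bHen(\Th_{\exists}(Kv))_{\exists}$. Concretely: given $(L,w)\models\bHen$ with $\Th_{\exists}(Kv)\subseteq\Th_{\exists}(Lw)$, and an existential $\Lval$-sentence $\varphi$ true in $(K,v)$, we must show $(L,w)\models\varphi$.

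I would route everything through the power series field. The central claim is that for every equicharacteristic henselian nontrivially valued field $(K,v)$,
\[
\Th_{\exists}(K,v)=\Th_{\exists}(\ps{(Kv)},v_{t}).
\]
Granting this, the argument for $(L,w)$ runs as follows.
\begin{enumerate}[{\bf(a)}]
\item Since $\Th_{\exists}(Kv)\subseteq\Th_{\exists}(Lw)$, there is an ultrapower $F^{*}$ of $Lw$ and an $\Lring$-embedding $Kv\hookrightarrow F^{*}$.
\item This embedding induces a valued-field embedding $(\ps{(Kv)},v_{t})\hookrightarrow(\ps{(F^{*})},v_{t})$. Existential sentences go up along embeddings, so $\varphi$ holds in $\ps{(F^{*})}$.
\item Since $\ps{(\cdot)}$ commutes with ultrapowers up to $\exists$-equivalence (by the central claim applied to the ultrapower of $(L,w)$, whose residue field is $F^{*}$), $\varphi$ also holds in an ultrapower of $(L,w)$, hence in $(L,w)$.
\end{enumerate}

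The central claim is where the real work lies. For $\supseteq$, I would pass to a sufficiently saturated elementary extension $(K^{*},v^{*})$. Choose $t\in K^{*}$ of positive value; equicharacteristic gives a section of the residue map on a lift of $Kv$, by Cohen-type arguments in saturated henselian fields, or at least on every finitely generated subfield, which suffices for existential statements. The henselization of $Kv(t)$ then embeds in $K^{*}$. Since $Kv(t)^{h}$ is existentially closed in $\ps{(Kv)}$ as a valued field, every existential sentence of $\ps{(Kv)}$ already holds in $Kv(t)^{h}$, hence in $K^{*}$.

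For $\subseteq$, I would embed $(K,v)$ into a valued field whose $\exists$-theory is that of $\ps{(Kv)}$. One route is to note that $K$ is existentially closed in $\ps{K}$ equipped with the composite of $v_{t}$ and $v$. This is the valued analogue of largeness of henselian fields, and can be proved via Kuhlmann-style existential closedness results for extensions with trivial residue extension and torsion-free value group extension. I would then compare $\ps{K}$ with the coarsened residue structure to reduce the value group to a copy of $\ZZ$.

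The main obstacle is positive characteristic. There, AKE-type transfer fails, and the existential closedness statements (that $Kv(t)^{h}\preceq_{\exists}\ps{(Kv)}$, and the analogous one for $\ps{K}$) must be taken from the theory of large fields and Kuhlmann's work on extremal/immediate extensions rather than from model completeness. I would first try to derive both from the single fact that a henselian nontrivially valued field is existentially closed, in the language of valued fields, in its power series field with the composite valuation.
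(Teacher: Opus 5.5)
Your two easy inclusions are exactly the ones the paper uses. Your reduction (a)--(c) from the central claim $\Th_{\exists}(K,v)=\Th_{\exists}(\ps{Kv},v_t)$ is also correct: you embed $Kv$ into an ultrapower of $Lw$, pass to power series, apply the central claim to the matching ultrapower of $(L,w)$, and finish with \L{}o\'s. Your argument for $\Th_{\exists}(\ps{Kv},v_t)\subseteq\Th_{\exists}(K,v)$ is sound as well: lift finitely generated subfields of $Kv$, choose $t$ of positive value, embed $Kv(t)^h$, and use $(Kv(t)^h,v_t)\preceq_{\exists}(\ps{Kv},v_t)$. The genuine gap is the other half of the central claim, $\Th_{\exists}(K,v)\subseteq\Th_{\exists}(\ps{Kv},v_t)$. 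This half carries all the difficulty of the theorem, and your sketch does not prove it.

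The route you suggest cannot deliver it. Knowing $(K,v)\preceq_{\exists}(\ps{K},v\circ v_t)$ only gives $\Th_{\exists}(K,v)=\Th_{\exists}(\ps{K},v\circ v_t)$. Since $v\circ v_t$ is again an equicharacteristic henselian valuation with residue field $Kv$, the inclusion $\Th_{\exists}(\ps{K},v\circ v_t)\subseteq\Th_{\exists}(\ps{Kv},v_t)$ that you would then need is just another instance of the statement you are trying to prove. Passing to the coarsened residue structure does not help: the $t$-adic coarsening of $v\circ v_t$ has residue field $(K,v)$ itself, so nothing removes $vK$ from the value group $\ZZ\times vK$. Moreover, a partial section embeds $(\ps{Kv},v_t)$ into $(\ps{K^{*}},v^{*}\circ v_t)$, so this existential closedness only reproves the inclusion you already have.

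What is actually required is an upper bound on $\Th_{\exists}(K,v)$. That means showing that finitely generated valued subfields of $(K,v)$ embed into elementary extensions of $(\ps{Kv},v_t)$, even though in characteristic $p$ their restricted valuations may be non-Abhyankar and have defect. In characteristic $0$ this follows from classical Ax--Kochen--Ershov. In characteristic $p$ it is exactly the existential AKE principle of \cite{AF16}.

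The paper does not reprove this. It takes $\Th_{\exists}(K,v)=\bHen(\Th_{\exists}(Kv))_{\exists}$ from \cite[Corollary 3.19(a)(III)]{AF26}, which is built on \cite{AF16}, and adds only the easy inclusions. So you should either cite that result for your central claim, in which case your (a)--(c) is a valid derivation, or supply a real argument for the hard half.
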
%

	The actual statement of \cite[Corollary 3.19(a)(III)]{AF26}
	is
	$\Th_{\exists}(K,v)=\bHen(\Th_{\exists}(Kv))_{\exists}$,
	which yields the theorem when combined with the easy inclusions
	$\Th_{\exists}(K,v)\supseteq\bHen(\Th(Kv))_{\exists}\supseteq\bHen(\Th_{\exists}(Kv))_{\exists}$.

	\begin{remark}
		In fact
		there is a uniform version of Theorem~\ref{thm:AF}:
		for every $\Lring$-theory $R$
		we have
		$\bHen(R)_{\exists}=\bHen((R\cup\F)_{\exists})_{\exists}$,
		where $\F$ is the $\Lring$-theory of fields.
		This is not stated explicitly in \cite{AF26},
		although it is used in several examples,
		but follows from the ``in particular'' statement of \cite[Proposition 2.24]{AF26},
		which is given in an abstract setting.
	\end{remark}
	}

We say that $F$ satisfies \eth, or is \define{existentially t-henselian}\footnote{The letter \dh\ is pronounced ``eth''.}, if it satisfies the equivalent conditions in the following lemma,
\SAcorrection{which should be compared with~\cite[Section 6.3]{AF17}.}

\begin{lemma}\label{lem:eth_1}
For a field $F$,
the following are equivalent.
\begin{enumerate}[{\bf(i)}]
\item
$\Th_{\exists}(F)=\Th_{\exists}(\ps{F})$.
\item
$\Th_{\exists}(F)=\bHen(\Th(F))_{\E}$.
\item
$\Th_{\exists}(F)=\bHen(\Th_{\exists}(F))_{\E}$.
\item
$\Th_{\exists}(F)$ is a fixed point of the map
$T\longmapsto\bHen(T)_{\E}$
from the power set of $\E$ to itself.
\item
There exists a henselian field $F'$ such that
$\Th_{\exists}(F)=\Th_{\exists}(F')$.
\item
$F$ is $\ZZ$-large.
\item
$\{F'\mid\Th_{\exists}(F)=\Th_{\exists}(F')\}$ is $\ZZ$-large.
\end{enumerate}
\end{lemma}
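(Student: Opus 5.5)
Our plan is to use Theorem~\ref{thm:AF} to turn every existential statement about henselian fields into one about residue fields, and then to close the circle through $\ZZ$-largeness. Write $T_0:=\Th_{\exists}(F)$. The operator $T\mapsto\bHen(T)_{\E}$ is monotone and extensive on complete existential theories of fields. Extensive means $\bHen(T)_{\E}\supseteq T$, which holds because the residue field embeds existentially into the henselian field: by Hensel's lemma, $\ps{k}$ contains $k$, and any model of $\bHen(T)$ satisfies $\Th_\exists(\ps{Kv})\supseteq\Th_\exists(Kv)$ by Theorem~\ref{thm:AF}.

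\textbf{Equivalence of (i)--(v).} Apply Theorem~\ref{thm:AF} to $(\ps{F},v_t)$. This gives
\begin{align*}
\Th_{\exists}(\ps{F})_{\E}=\bHen(\Th(F))_{\E}=\bHen(T_0)_{\E}.
\end{align*}
It yields (i)$\Leftrightarrow$(ii)$\Leftrightarrow$(iii) at once, and (iii)$\Leftrightarrow$(iv) is a restatement. Next, (i)$\Rightarrow$(v) by taking $F'=\ps{F}$. For (v)$\Rightarrow$(iii), let $F'$ carry a nontrivial henselian valuation $w$. If $w$ is mixed characteristic, or if we want an equicharacteristic one, we must be careful, so we plan to reduce to the equicharacteristic case as follows. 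Pass to a suitable coarsening, or to an elementary extension in which a coarsening has equicharacteristic $0$; if $F'Fw$ has characteristic $p$ throughout, the valuation is already equicharacteristic. With $k:=F'w$, Theorem~\ref{thm:AF} gives $T_0=\bHen(\Th_\exists(k))_{\E}$. Monotonicity plus idempotence then give $\bHen(T_0)_{\E}=\bHen(\bHen(\Th_\exists k)_\E)_\E$. Idempotence holds because a henselian field over a henselian field is again henselian: take $\ps{\ps{k}}$, which is henselian with residue field $k$ via the composite valuation. Hence $\bHen(T_0)_{\E}=T_0$.

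\textbf{Link with (vi) and (vii).} We aim to show that (vii) is equivalent to the existence of fields $k_1,k_2$ with $\Th_\exists=T_0$ and a henselian $E\subseteq k_1$ with $Ev\cong k_2$. Given such data, $T_0=\Th_\exists(k_1)\supseteq\Th_\exists(E)\supseteq\bHen(\Th_\exists(k_2))_\E=\bHen(T_0)_\E$. Here the middle inclusion uses Theorem~\ref{thm:AF} applied to $(E,v)$; this needs $v$ equicharacteristic, and we would handle the mixed case separately by noting that characteristic is part of $T_0$. So we get (iii). Conversely, from (i), $\ps{F}$ has the same existential theory as $F$. By compactness, $\ps{F}$ embeds into an elementary extension $F^*\succeq F$, because $\ps{F}\models\Th_\exists(F)$, the universal consequences, and existential closedness go through a saturated model. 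Then $E=\ps{F}\subseteq F^*$ with $Ev\cong F$ witnesses that $F$ is $\ZZ$-large, giving (vi). The implication (vi)$\Rightarrow$(vii) is trivial, since elementary equivalence implies equality of existential theories.

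\textbf{Main obstacle.} The delicate step is (v)$\Rightarrow$(iii), and its analogue in (vii)$\Rightarrow$(iii), when the given henselian valuation is not equicharacteristic. In characteristic $0$ with residue characteristic $p$, we will first try to replace $w$ by its coarsening with residue characteristic $0$. This works if that coarsening is nontrivial. Otherwise the valuation has rank-one mixed characteristic, and we pass to an $\aleph_1$-saturated elementary extension, where the value group has infinitesimals, so the coarsening is nontrivial and still henselian. Since existential theories are preserved under elementary extension, this reduces everything to the equicharacteristic case, where Theorem~\ref{thm:AF} applies.
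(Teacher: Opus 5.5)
Your proof is correct. For (i)--(v) it is essentially the paper's argument. Theorem~\ref{thm:AF} applied to $(\ps{F},v_t)$ gives (i)$\Leftrightarrow$(ii)$\Leftrightarrow$(iii)$\Leftrightarrow$(iv). For (v) you pass to an elementary extension carrying a nontrivial equicharacteristic henselian valuation, and then use that a composite of henselian valuations is henselian.

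The genuine difference is in the link with $\ZZ$-largeness. The paper does not prove this part itself. It quotes \cite[Proposition 6.10]{AF17} for (i)$\Leftrightarrow$(vi). For (vii)$\Rightarrow$(vi) it first embeds $F_1(t)^h$ into $F_2$ and quotes the same proposition again. You instead close the cycle (i)$\Rightarrow$(vi)$\Rightarrow$(vii)$\Rightarrow$(iii) by hand.

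For (vii)$\Rightarrow$(iii), you observe that the witness $(E,v)$ is itself a model of $\bHen(T_0)$; it is equicharacteristic because an existential theory fixes the characteristic. Hence $\bHen(T_0)_{\E}\subseteq\Th_{\exists}(E)\subseteq\Th_{\exists}(k_1)=T_0$, and only the easy inclusion of Theorem~\ref{thm:AF} is needed here. Combined with extensiveness, $T_0\subseteq\Th_{\exists}(\ps{F})=\bHen(T_0)_{\E}$, this gives (iii). For (i)$\Rightarrow$(vi), you embed $\ps{F}$ into a model of $\Th(F)$ and let it serve as its own witness.

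What your route buys is a self-contained proof resting only on Theorem~\ref{thm:AF} and compactness. It also sidesteps lifting $F_1$ into $F_0$; in positive characteristic such a partial section of the residue map is in general only available after an elementary extension, cf.~\cite{ADF23}. What the paper's route buys is brevity, by reusing work already done in \cite{AF17}.

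Two justifications should be tightened. First, the embedding of $\ps{F}$ into a model of $\Th(F)$ comes from $\ps{F}\models\Th_{\forall}(F)$. That is the nontrivial inclusion $\Th_{\exists}(\ps{F})\subseteq\Th_{\exists}(F)$ in (i), not $\ps{F}\models\Th_{\exists}(F)$, which always holds. Also, since the definition of $\ZZ$-largeness only asks for $k_1\equiv F$, you do not even need an elementary extension of $F$.

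Second, in the mixed-characteristic reduction the problematic case is not ``rank one''. It is the case where the multiples of $w(p)$ are cofinal in the value group, which can also happen in higher rank. Your remedy still works: in an $\aleph_1$-saturated elementary extension some $\gamma$ satisfies $\gamma>n\,w(p)$ for all $n\geq 1$. So the convex hull of $\ZZ w(p)$ is proper, and the corresponding coarsening is nontrivial, henselian, and of residue characteristic $0$.
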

\begin{proof}
	\SAcorrection{
		From Theorem~\ref{thm:AF} we have
		$\Th_{\exists}(\ps{F},v_{t})=\bHen(\Th(F))_{\exists}=\bHen(\Th_{\exists}(F))_{\exists}$.
		Taking the reduct to $\Lring$, this yields
		$\Th_{\exists}(\ps{F})=\bHen(\Th(F))_{\E}=\bHen(\Th_{\exists}(F))_{\E}$,
		and 
	}
thus
	\SAcorrection{{\bf(i,ii,iii,iv)}}
are equivalent.
If there exists
	\SAcorrection{$v\in\sfHe(F)$}
non-trivial,
then
$\Th_{\exists}(F,v)=\Th_{\exists}(\ps{F},v\circ v_{t})$,
and so
$\Th_{\exists}(F)=\bHen(\Th(F))_{\E}$.
Next suppose that there exists henselian $F'$ such that $\Th_{\exists}(F)=\Th_{\exists}(F')$.
Replacing $F'$ with an elementary extension if necessary, we may assume
	\SAcorrection{that}
there exists
	\SAcorrection{$v\in\sfHe(F')$}
non-trivial.
Then
$\Th_{\exists}(F)=\Th_{\exists}(F')=\bHen(\Th(F'))_{\E}=\bHen(\Th(F))_{\E}$,
which shows that
	\SAcorrection{{\bf(v)$\Rightarrow$(ii).}}
Conversely, if
$\Th_{\exists}(F)=\bHen(\Th(F))_{\E}$
then
$\Th_{\exists}(F)=\Th_{\exists}(\ps{F})$.
Thus
	\SAcorrection{{\bf(v)$\Rightarrow$(i).}}
By
\cite[Proposition 6.10 ($1\Leftrightarrow4$)]{AF17},
$\Th_{\exists}(F)=\Th_{\exists}(\ps{F})$
is equivalent to $\ZZ$-largeness,
which proves
	\SAcorrection{{\bf(i)$\Leftrightarrow$(vi).}}
Clearly
	\SAcorrection{{\bf(vi)$\Rightarrow$(vii).}}

Suppose that for some $F_{1},F_{2}\in\{F'\mid\Th_{\exists}(F)=\Th_{\exists}(F')\}$
there exists $F_{0}\subseteq F_{2}$ and a
non-trivial
henselian valuation $w$ on $F_{0}$ with $F_{0}w$ isomorphic to $F_{1}$.
Then $w$ is equicharacteristic and henselian, so we may embed $F_{1}(t)^{h}$ into $F_{2}$.
This shows that
$\Th_{\exists}(F_{1})\subseteq\Th_{\exists}(F_{1}(t)^{h})\subseteq\Th_{\exists}(F_{2})$.
By
\cite[Proposition 6.10 ($4\Rightarrow1$)]{AF17},
we conclude that
$F$ is $\ZZ$-large,
i.e.~%
	\SAcorrection{{\bf(vii)$\Rightarrow$(vi).}}
\end{proof}

\begin{lemma}\label{lem:eth_2}
Let $K$ be a field and
	\SAcorrection{$v\in\sfHe(K)$}
non-trivial.
Then
$Kv$ satisfies \eth\
if and only if
$\Th_{\exists}(K,v)=\bHen(\Th(K))_{\exists}$.
\end{lemma}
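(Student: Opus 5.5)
The plan is to reduce both sides of the equivalence to statements about existential theories of fields, using Theorem~\ref{thm:AF} twice and Lemma~\ref{lem:eth_1}.

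\textbf{Step 1.} By Theorem~\ref{thm:AF} applied to $(K,v)$, we have $\Th_{\exists}(K,v)=\bHen(\Th_{\exists}(Kv))_{\exists}$. Applied to $(\ps{K},v_{t})$, whose residue field is $K$, it gives
\[
\bHen(\Th(K))_{\exists}=\bHen(\Th_{\exists}(K))_{\exists}=\Th_{\exists}(\ps{K},v_{t}).
\]
So the right-hand condition of the lemma says exactly $\Th_{\exists}(K,v)=\Th_{\exists}(\ps{K},v_{t})$. Taking $\Lring$-reducts of the first identity also gives $\Th_{\exists}(K)=\bHen(\Th_{\exists}(Kv))_{\E}$. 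By Lemma~\ref{lem:eth_1}\textbf{(iii)}, $Kv$ satisfies \eth\ if and only if $\Th_{\exists}(Kv)=\bHen(\Th_{\exists}(Kv))_{\E}$. Combining these, $Kv$ satisfies \eth\ if and only if $\Th_{\exists}(Kv)=\Th_{\exists}(K)$.

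\textbf{Step 2 (forward direction).} If $\Th_{\exists}(Kv)=\Th_{\exists}(K)$, then
\[
\Th_{\exists}(K,v)=\bHen(\Th_{\exists}(Kv))_{\exists}=\bHen(\Th_{\exists}(K))_{\exists}=\bHen(\Th(K))_{\exists},
\]
by the two identities of Step 1.

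\textbf{Step 3 (converse).} Assume $\bHen(\Th_{\exists}(Kv))_{\exists}=\bHen(\Th_{\exists}(K))_{\exists}$. By Step 1 it suffices to show $\Th_{\exists}(Kv)=\Th_{\exists}(K)$, i.e.\ that the map $R\longmapsto\bHen(R)_{\exists}$ is injective on existential theories of fields. The key point is that the existential theory of the residue field can be read off from the existential $\Lval$-theory. For an existential sentence $\exists\bar x\,\bigwedge_i f_i(\bar x)=0$, I would try to show that it holds in the residue field of a model of $\bHen$ if and only if a corresponding existential $\Lval$-sentence holds, roughly ``there are $\bar x\in\mathcal{O}$ with all $f_i(\bar x)\in\mathfrak{m}$''. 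The difficulty is that membership in $\mathfrak{m}$ is not obviously positive-existential in the language with a predicate for $\mathcal{O}$.

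I expect this to be the main obstacle. My first attempt would be to check how \cite{AF26} sets up the correspondence: I expect it to establish that the residue existential theory is determined by the existential $\Lval$-theory (this is what makes $\bHen(\Th_{\exists}(Kv))_{\exists}$ recover $\Th_{\exists}(Kv)$), or to use a presentation of $\mathcal{O}$ under which $\mathfrak{m}$ is uniformly existentially definable in all models of $\bHen$. If that works, the identity $\Th_{\exists}(K,v)=\Th_{\exists}(\ps{K},v_{t})$ transfers to $\Th_{\exists}(Kv)=\Th_{\exists}(K)$, and Step 1 concludes the proof.

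As a fallback, I would pass to sufficiently saturated elementary extensions and use the embedding characterization behind Theorem~\ref{thm:AF}: existential inclusion of henselian valued fields corresponds to embeddings over the residue fields. From that I would extract embeddings showing $\Th_{\exists}(Kv)\subseteq\Th_{\exists}(K)$ and the reverse inclusion.
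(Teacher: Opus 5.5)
Your route is the paper's. The forward direction is the same chain of equalities via Theorem~\ref{thm:AF} and Lemma~\ref{lem:eth_1}\textbf{(iii)}. For the converse, the paper likewise derives $\Th_{\exists}(Kv)=\Th_{\exists}(K)$ directly from the hypothesis, in one line. It then concludes with Lemma~\ref{lem:eth_1}\textbf{(v)}, since $K$ is henselian, where you use your Step~1; both are fine.

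The gap is that you leave this derivation unproved, and the obstacle you name is not real. Positivity is irrelevant: existential sentences may contain negated atomic formulas, and $\neg\mathcal{O}(y)$ is a negated atomic $\Lval$-formula. Thus
\[
x\in\mathfrak{m}\iff x=0\vee\exists y\,\bigl(xy=1\wedge\neg\mathcal{O}(y)\bigr),\qquad x\in\mathcal{O}^{\times}\iff\exists y\,\bigl(xy=1\wedge\mathcal{O}(y)\bigr)
\]
hold uniformly in every valued field, with existential right-hand sides.

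Take an existential $\Lring$-sentence $\phi=\exists\bar x\,\psi(\bar x)$, with $\psi$ in disjunctive normal form in equations $f(\bar x)=0$, $f\in\ZZ[\bar X]$. Let $\phi^{\bfk}$ be obtained by restricting $\bar x$ to $\mathcal{O}$, replacing each $f(\bar x)=0$ by $f(\bar x)\in\mathfrak{m}$, and each $f(\bar x)\neq0$ by $f(\bar x)\in\mathcal{O}^{\times}$. This is an existential $\Lval$-sentence. Every tuple of $Kv$ lifts to $\mathcal{O}$, and the residue map is a ring homomorphism, so $Kv\models\phi\iff(K,v)\models\phi^{\bfk}$ for every valued field $(K,v)$. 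This is the interpretation $\iota_{\bfk}$ mentioned in the last section. Applying it to $(K,v)$ and to $(\ps{K},v_{t})$ turns $\Th_{\exists}(K,v)=\Th_{\exists}(\ps{K},v_{t})$ into $\Th_{\exists}(Kv)=\Th_{\exists}(K)$. That is exactly the injectivity you wanted, and Step~1 finishes.

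Your fallback would also have worked, using only basic model theory. The inclusion $\Th_{\exists}(K,v)\subseteq\Th_{\exists}(\ps{K},v_{t})$ gives an $\Lval$-embedding of $(K,v)$ into an elementary extension of $(\ps{K},v_{t})$. Such an embedding preserves $\mathcal{O}$ and its complement, so it induces an embedding of $Kv$ into an elementary extension of $K$. Hence $\Th_{\exists}(Kv)\subseteq\Th_{\exists}(K)$, and the reverse inclusion follows symmetrically. Once either version of this step is written out, your proof is complete.
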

\begin{proof}
For the implication $\Rightarrow$ we have
	\SAcorrection{%
		\begin{align*}\begin{array}{rll}
		\Th_{\exists}(K,v)
		&=
		\bHen(\Th_{\exists}(Kv))_{\exists}
		&\text{by Theorem~\ref{thm:AF}}
		\\
		&=
		\bHen(\bHen(\Th_{\exists}(Kv))_{\E})_{\exists}
		&\text{by Lemma~\ref{lem:eth_1}{\bf(iii)} since $Kv$ satisfies \eth}
		\\
		&=
		\bHen(\Th_{\exists}(K))_{\exists}
		&\text{by Theorem~\ref{thm:AF}}
		\\
		&=
		\bHen(\Th(K))_{\exists},
		&\text{by Theorem~\ref{thm:AF}.}
	\end{array}\end{align*}%
	}

	\noindent
For the converse, from the hypothesis $\Th_{\exists}(K,v)=\bHen(\Th(K))_{\exists}$
it follows that $\Th_{\exists}(Kv)=\Th_{\exists}(K)$, and $K$ admits a
non-trivial
henselian valuation,
thus $Kv$ satisfies \eth.
\end{proof}

Note that any field with an algebraically closed subfield satisfies \eth.
We are now in a position to describe the theories
$\Th_{\exists}(K,v)$
for all
	\SAcorrection{$v\in\sfHe(K)$.}

\begin{theorem}[{Dichotomy}]\label{thm:Dichotomy}
Let
$K$ be a field of characteristic
$p\in\PP\cup\{0\}$.
We have the following dichotomy:
\begin{enumerate}[{\bf(i)}]
\item
Either
		\SAcorrection{$\sfHe(K)$}
is not linearly ordered,
in which case
$\FF_{p}^{\mathrm{alg}}\subseteq K$, and
for all
non-trivial
		\SAcorrection{$v\in\sfHe(K)$}
we have that
$Kv$ satisfies \eth\ and
$$\Th_{\exists}(K,v)=\bHen(\Th(K))_{\exists}=\bHen(\ACF_{p})_{\exists}.$$
\item
Or
		\SAcorrection{$\sfHe(K)$}
is linearly ordered,
in which case
it admits a smallest element $v^{e}_{K}$,
and for all
non-trivial
		\SAcorrection{$v\in\sfHe(K)\setminus\{v^{e}_{K}\}$}
we have
		\SAcorrection{that}
$Kv$ satisfies \eth\ and
$$\Th_{\exists}(K,v)=\bHen(\Th(K))_{\exists}=\bHen(\bHen(\Th(Kv^{e}_{K}))_{\Sent(\Lring)})_{\exists}.$$
Moreover if $v^{e}_{K}$ is
non-trivial
then $\Th_{\exists}(K,v^{e}_{K})=\bHen(\Th(Kv^{e}_{K}))_{\exists}$,
and this equals $\bHen(\Th(K))_{\exists}$ if and only if $Kv^{e}_{K}$ satisfies \eth.
\end{enumerate}
In particular, $Kv$ does not satisfy \eth\ for at most one
	\SAcorrection{$v\in\sfHe(K)$.}
If such a $v$ exists, it is the finest element $v^{e}_{K}$ of
	\SAcorrection{$\sfHe(K)$,}
which is totally ordered.
\end{theorem}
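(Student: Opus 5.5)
The plan is to analyse the poset $\sfHe(K)$ using the standard facts on henselian valuations recalled in Section 1, and then compute the existential theories with Theorem~\ref{thm:AF} and Lemmas~\ref{lem:eth_1} and~\ref{lem:eth_2}.

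\emph{Structure of $\sfHe(K)$.} Two henselian valuations that are incomparable both lie in $\sfH_{2}(K)$, since $\sfH_{1}(K)$ is a chain lying below $\sfH_{2}(K)$. By F.K.~Schmidt's argument, their join $u$ (the finest common coarsening) is then henselian and $Ku$ carries two independent nontrivial henselian valuations. Hence $Ku$ is separably closed. In equal characteristic this means $Ku$ contains $\FF_{p}^{\alg}$, and lifting through the henselian $u$ (Hensel's lemma on the separable polynomials $X^{p^{n}}-X$, or cyclotomic polynomials when $p=0$) gives $\FF_{p}^{\alg}\subseteq K$. When $\sfHe(K)$ is linearly ordered, the infimum of the chain exists by property (iii) of $\sfS_{\val}(K)$. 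This infimum is henselian: prolongations to a finite extension are unique at every coarsening, so they are unique at the intersection. It is also equicharacteristic, because the residue characteristic of each member is $p$ and the maximal ideal of the infimum is the union of the maximal ideals. This gives $v^{e}_{K}$.

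\emph{Case (i).} Here $K\supseteq\FF_{p}^{\alg}$, so every residue field $Kv$ (for $v$ equicharacteristic) contains an algebraically closed subfield and hence satisfies \eth, as observed before the theorem. Lemma~\ref{lem:eth_2} then gives $\Th_{\exists}(K,v)=\bHen(\Th(K))_{\exists}$. For the last equality I would show that $\Th_{\exists}(K)=\Th_{\exists}(\FF_{p}^{\alg})=\ACF_{p,\exists}$ is forced, and then apply Theorem~\ref{thm:AF} with the uniform remark that only existential consequences matter. This step is the main obstacle. A field containing $\FF_{p}^{\alg}$ need not have the existential theory of $\ACF_{p}$; for example, $\RR$-like behaviour is impossible, but $\FF_{p}^{\alg}(x)$ satisfies $\exists y\,(y^{2}=x)$-type sentences differently only with parameters. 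Without parameters, every existential sentence true in $K$ is true in $\FF_{p}^{\alg}$. The argument is that a finitely generated $\FF_{p}^{\alg}$-algebra with a $K$-point has an $\FF_{p}^{\alg}$-point by the Nullstellensatz. Conversely $\FF_{p}^{\alg}\subseteq K$, so $\Th_{\exists}(K)=\Th_{\exists}(\FF_{p}^{\alg})$. Consequently $\bHen(\Th(K))_{\exists}=\bHen(\Th_{\exists}(K))_{\exists}=\bHen(\ACF_{p,\exists})_{\exists}=\bHen(\ACF_{p})_{\exists}$ by Theorem~\ref{thm:AF} and the uniform remark.

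\emph{Case (ii).} Take a nontrivial $v\neq v^{e}_{K}$. Then $v^{e}_{K}<v$ strictly, so $v^{e}_{K}$ induces a nontrivial henselian equicharacteristic valuation on $Kv$. Hence $Kv$ is henselian and satisfies \eth\ by Lemma~\ref{lem:eth_1}(v), and Lemma~\ref{lem:eth_2} gives $\Th_{\exists}(K,v)=\bHen(\Th(K))_{\exists}$. The residue field of that induced valuation is $Kv^{e}_{K}$, so Theorem~\ref{thm:AF} applied to $(Kv,\bar v^{e}_{K})$ yields $\Th_{\exists}(Kv)=\bHen(\Th(Kv^{e}_{K}))_{\E}$. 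If $v^{e}_{K}$ is trivial, this case does not occur and the formula is vacuous. Otherwise $\Th_{\exists}(K)=\bHen(\Th(Kv^{e}_{K}))_{\E}$ by Theorem~\ref{thm:AF} applied to $(K,v^{e}_{K})$. I would then argue that $\bHen(\Th(Kv^{e}_{K}))_{\Sent(\Lring)}$ and $\bHen(\Th(Kv^{e}_{K}))_{\E}$ generate the same existential consequences under $\bHen(\cdot)_{\exists}$, using the uniform remark. For the ``moreover'' part, Theorem~\ref{thm:AF} gives the first equality, and Lemma~\ref{lem:eth_2} gives the equivalence with \eth. The final ``in particular'' clause collects the two cases.
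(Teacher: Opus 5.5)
Your overall architecture matches the paper's. You use Lemma~\ref{lem:eth_1}{\bf(v)} and Lemma~\ref{lem:eth_2} for the \eth\ assertions, Theorem~\ref{thm:AF} applied to $(Kv,\bar{v}^{e}_{K})$ and to $(K,v^{e}_{K})$, and the uniform remark to trade $\bHen(\cdot)_{\Sent(\Lring)}$ for $\bHen(\cdot)_{\E}$. In case {\bf(i)} you first prove $\FF_{p}^{\alg}\subseteq K$ and then push it down to each $Kv$. The paper instead compares each $v$ with $v_{K}$ and uses a partial section of the residue map \cite[Lemma 2.3]{ADF23}; that difference is cosmetic.

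The step that fails is the lifting in characteristic zero. With the paper's convention $\FF_{0}=\QQ$, case {\bf(i)} asserts $\QQ^{\alg}\subseteq K$. Everything you do afterwards needs all of $\QQ^{\alg}$: that each $Kv$ contains an algebraically closed subfield, and the Nullstellensatz computation $\Th_{\exists}(K)=\ACF_{0,\exists}$. Lifting roots of cyclotomic polynomials only yields $\QQ(\mu_{\infty})\subseteq K$, which is not algebraically closed. Nothing you wrote shows, for instance, that $\exists x\,(x^{3}=2)$ holds in $K$.

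The repair is as follows.
\begin{enumerate}[{\bf(a)}]
\item Since $u$ coarsens the equicharacteristic $v_{1}$, it is equicharacteristic. So every nonzero integer is a unit of $\mathcal{O}_{u}$, and $Ku$ is algebraically closed of characteristic $0$.
\item Hence every irreducible $f\in\QQ[X]$, scaled to have coefficients in $\ZZ$, has a simple root in $Ku$, and Hensel's lemma lifts it to $K$.
\item An algebraic extension of $\QQ$ containing a root of every irreducible polynomial over $\QQ$ is algebraically closed: a root of the minimal polynomial of a primitive element of a Galois closure generates that whole Galois closure. So the relative algebraic closure of $\QQ$ in $K$ is $\QQ^{\alg}$.
\item For the push-down, say why $\QQ^{\alg}$ survives in $Kv$. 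An equicharacteristic valuation is trivial on $\QQ$, hence on the relative algebraic closure of $\QQ$ in $K$, so the residue map embeds $\QQ^{\alg}$ into $Kv$.
\end{enumerate}

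Some smaller points.
\begin{enumerate}[{\bf(a)}]
\item Delete the sentence claiming that a field containing $\FF_{p}^{\alg}$ ``need not have the existential theory of $\ACF_{p}$''. For parameter-free sentences it always does, as you then correctly argue. This is not an obstacle at all, since $\FF_{p}^{\alg}$ is existentially closed in every extension field by model completeness of $\ACF_{p}$.
\item The detour through F.K.~Schmidt and the join $u$ is unnecessary. Incomparable $v_{1},v_{2}$ already lie in $\sfH_{2}(K)$, so you may lift directly from the separably closed field $Kv_{1}$.
\item For the ``in particular'' clause in case {\bf(ii)}, say explicitly that when $v^{e}_{K}$ is nontrivial the trivial valuation is also covered. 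In that case $K$ is henselian and so satisfies \eth\ by Lemma~\ref{lem:eth_1}{\bf(v)}.
\end{enumerate}
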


\begin{proof}
	\SAcorrection{%
Suppose first that
	$\sfHe(K)$
is not linearly ordered.
In this case there exist equicharacteristic henselian valuations $v$ on $K$
with $Kv$ separably closed,
for example the canonical henselian valuation $v_{K}$.
For notational simplicity we denote $\FF_{0}=\QQ$.
	\begin{claim}
		For every
		$v\in\sfHe(K)$,
	the residue field $Kv$ contains $\FF_{p}^{\alg}$.
	\end{claim}
	\begin{claimproof}
		Each
		$v\in\sfHe(K)$
		is either a refinement of $v_{K}$,
		in which case the residue field $Kv$ is also separably closed
		and the claim follows,
		or it is a proper coarsening of $v_{K}$.
		In this latter case, $v_{K}$ incudes a nontrivial henselian valuation $\bar{v}_{K}$ on $Kv$.
		The residue field $Kv_{K}$ is separably closed, and so $\FF_{p}^{\alg}\subseteq Kv_{K}$.
		By henselianity of $\bar{v}_{K}$
		(see for example~\cite[Lemma 2.3]{ADF23}),
		there is a partial section
		$\zeta:\FF_{p}^{\alg}\rightarrow Kv$
		of the residue map of $\bar{v}_{K}$,
		which proves the claim.
	\end{claimproof}
	Therefore
	$\Th_{\exists}(Kv)=\ACF_{p,\exists}$,
	and so $Kv$ satisfies~\eth.
	Next we suppose that $v$ is nontrivial,
	so
	$\Th_{\exists}(K,v)=\bHen(\Th(K))_{\exists}$
	by Lemma~\ref{lem:eth_2}.
	By Theorem~\ref{thm:AF} we have
	\begin{align*}
		\Th_{\exists}(K,v)=\bHen(\Th_{\exists}(Kv))_{\exists}=\bHen(\ACF_{p,\exists})_{\exists}=\bHen(\ACF_{p})_{\exists},
	\end{align*}
	which finishes case {\bf(i)}.}
	
Suppose that
	\SAcorrection{$\sfHe(K)$}
is linearly ordered.
Let $v^{e}_{K}$ denote the finest element of
	\SAcorrection{$\sfHe(K)$:}
the valuation ring corresponding to $v^{e}_{K}$ is the intersection of all the valuations rings of elements of
	\SAcorrection{$\sfHe(K)$,}
and such intersections of chains of equicharacteristic henselian valuation rings are again equicharacteristic henselian valuation rings.
For every
non-trivial
	\SAcorrection{$v\in\sfHe(K)\setminus\{v^{e}_{K}\}$, $v^{e}_{K}$}
induces a
non-trivial
equicharacteristic henselian valuation $\bar{v}^{e}_{K}$ on $Kv$.
Straight away
\SAcorrection{by Theorem~\ref{thm:AF}}
we have
$\Th_{\exists}(K,v)=\bHen(\Th(Kv))_{\exists}$
and
$\Th_{\exists}(Kv,\bar{v}^{e}_{K})=\bHen(\Th(Kv^{e}_{K}))_{\exists}$,
\SAcorrection{so that
$\Th_{\exists}(Kv)=\bHen(\Th(Kv^{e}_{K}))_{\E}$.}
Combining these equalities
\SAcorrection{and again applying Theorem~\ref{thm:AF},}
we have
$\Th_{\exists}(K,v)=
\bHen(\bHen(\Th(Kv^{e}_{K}))_{\E})_{\exists}=
\bHen(\bHen(\Th(Kv^{e}_{K}))_{\Sent(\Lring)})_{\exists}$.
Moreover
$Kv$ satisfies \eth, so
$\Th_{\exists}(K,v)=\bHen(\Th(K))_{\exists}$
by Lemma~\ref{lem:eth_2}.

Finally we suppose that $v^{e}_{K}$ is
non-trivial.
\SAcorrection{By a further application of Theorem~\ref{thm:AF}}
we have
$\Th_{\exists}(K,v^{e}_{K})=\bHen(\Th(Kv^{e}_{K}))_{\exists}$.
It only remains to argue that
$Kv^{e}_{K}$ satisfies \eth\
if and only if
$\Th_{\exists}(K,v^{e}_{K})=\bHen(\Th(K))_{\exists}$,
but this is simply Lemma~\ref{lem:eth_2} applied to $v=v^{e}_{K}$.
\end{proof}

\SAcorrection{The \define{canonical equicharacteristic henselian valuation} $v_{K}^{e}$, which was defined above in case {\bf(i)}, may also be defined in case {\bf(ii)} by writing $v_{K}^{e}:=v_{K}$.}
	\SAcorrection{This allows the rephrasing of Theorem~\ref{thm:Dichotomy}}
as follows:

\begin{theorem}\label{thm:1b}
Let
	$K$ be any field.
	For all
	non-trivial
	\SAcorrection{$v\in\sfHe(K)\setminus\{v^{e}_{K}\}$,}
	the valued fields $(K,v)$ share the same existential theory.
	Namely:
	$$\Th_{\exists}(K,v)=\bHen(\Th(K))_{\exists}=\bHen(
	\underbrace{\bHen(\Th(k))^{\vdash}\cap\Sent(\mathfrak{L}_{\mathrm{ring}})}%
	_{\text{the deductive closure in $\Lring$ of $\bHen(\Th(k))$}})_{\exists},$$
	where $k$ denotes the residue field of $v^{e}_{K}$.
\end{theorem}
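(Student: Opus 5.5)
I will derive Theorem~\ref{thm:1b} from the Dichotomy Theorem~\ref{thm:Dichotomy} by splitting according to which of its two cases holds. The Main Theorem then follows, since $\Th_{\exists}(K,v)$ takes at most three values: the common value described here, the value for $v=v^{e}_{K}$, and the value for the trivial valuation.

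\emph{Case (ii): $\sfHe(K)$ is linearly ordered.} Here $v^{e}_{K}$ is the finest element, and the Dichotomy gives, for every non-trivial $v\in\sfHe(K)\setminus\{v^{e}_{K}\}$,
$$\Th_{\exists}(K,v)=\bHen(\Th(K))_{\exists}=\bHen(\bHen(\Th(k))_{\Sent(\Lring)})_{\exists}.$$
It then only remains to identify $\bHen(\Th(k))_{\Sent(\Lring)}$ with $\bHen(\Th(k))^{\vdash}\cap\Sent(\Lring)$. This is a matter of notation: $T_{\mathcal{S}}$ denotes the set of consequences of $T$ lying in the fragment $\mathcal{S}$.

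\emph{Case (i): $\sfHe(K)$ is not linearly ordered.} By convention $v^{e}_{K}=v_{K}$, which is non-trivial, equicharacteristic henselian, and has separably closed residue field $k$. The Dichotomy gives, for every non-trivial $v\in\sfHe(K)$,
$$\Th_{\exists}(K,v)=\bHen(\Th(K))_{\exists}=\bHen(\ACF_{p})_{\exists}.$$
So I need $\bHen(\bHen(\Th(k))_{\Sent(\Lring)})_{\exists}=\bHen(\ACF_{p})_{\exists}$. Since $k$ is separably closed of characteristic $p$, we have $\Th_{\exists}(k)=\ACF_{p,\exists}$. By Theorem~\ref{thm:AF}, applied to any model of $\bHen(\Th(k))$, the $\E$-part of $\bHen(\Th(k))$ equals $\bHen(\Th_{\exists}(k))_{\E}=\bHen(\ACF_{p,\exists})_{\E}$. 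This in turn equals $\ACF_{p,\exists}$, because $\Th_{\exists}(\ps{\FF_{p}^{\alg}})=\ACF_{p,\exists}$ (an algebraically closed field satisfies \eth, by Lemma~\ref{lem:eth_1}).

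The full theory $T':=\bHen(\Th(k))_{\Sent(\Lring)}$ could be larger than its existential part. However, applying Theorem~\ref{thm:AF} once more gives
$$\bHen(T')_{\exists}=\bHen(\Th(F))_{\exists}=\bHen(\Th_{\exists}(F))_{\exists}$$
for any model $F$ of $T'$. Here I use the uniform version in the remark after Theorem~\ref{thm:AF}, namely $\bHen(R)_{\exists}=\bHen((R\cup\F)_{\exists})_{\exists}$. With $T'_{\exists}=\ACF_{p,\exists}$ this yields $\bHen(\ACF_{p,\exists})_{\exists}=\bHen(\ACF_{p})_{\exists}$, as required.

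The main obstacle is this last uniformity step. Theorem~\ref{thm:AF} is stated for a single valued field rather than for an arbitrary theory $R$, so I would invoke the uniform remark. Failing that, I would argue directly: every model of $\bHen(T')$ has a residue field whose existential theory contains $\ACF_{p,\exists}$. Since $\ACF_{p,\exists}$ is a complete existential theory among fields of characteristic $p$, that existential theory equals $\ACF_{p,\exists}$. Hence all such models have the same existential theory $\bHen(\ACF_{p})_{\exists}$, and $\bHen(T')_{\exists}$ is exactly this common theory.
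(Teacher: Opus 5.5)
Your proposal is correct and follows the paper, which presents Theorem~\ref{thm:1b} simply as a rephrasing of Theorem~\ref{thm:Dichotomy} with $v^{e}_{K}:=v_{K}$ in case~{\bf(i)}. Your check that $\bHen(\bHen(\Th(k))_{\Sent(\Lring)})_{\exists}=\bHen(\ACF_{p})_{\exists}$ for separably closed $k$ usefully supplies the step the paper leaves implicit; you do this via $\bHen(\Th(k))_{\E}=\ACF_{p,\exists}$ and Theorem~\ref{thm:AF} applied model by model. One harmless slip: $v_{K}$ need not be non-trivial in case~{\bf(i)} (e.g.\ for $K=\CC$ the poset $\sfHe(K)$ is not linearly ordered while $v_{K}$ is trivial), but your argument never uses this.
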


This theorem
\SAcorrection{has the following corollary.}

\begin{corollary}
For any field $K$, there are at most three distinct existential theories of the valued fields
$(K,v)$,
for
	\SAcorrection{$v\in\sfHe(K)$}%
	:
\begin{enumerate}[{\bf(I)}]
\item
	the trivial case
		\SAcorrection{$v=v_{K}^{\triv}$}%
		,
\item
$v$ is
non-trivial
and $Kv$ satisfies \eth,
\item
$v$ is
non-trivial
and $Kv$ does not satisfy \eth.
\end{enumerate}
Cases {\bf(I,II,III)} are always distinct.
Case {\bf(I)} exists for every $K$, but both cases {\bf(II)} and {\bf(III)} may be void, independently, according to the following examples.
However, since at most one element of
	\SAcorrection{$\sfHe(K)$}
satisfies {\bf(III)},
case {\bf(II)} is void if and only if either
	\SAcorrection{$\sfHe(K)=\{v^{\triv}_{K}\}$}
or
	\SAcorrection{$\sfHe(K)=\{v^{\triv}_{K},v^{e}_{K}\}$}
with $Kv^{e}_{K}$ not satisfying \eth.
\end{corollary}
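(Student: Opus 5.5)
The plan is to read the corollary off Theorem~\ref{thm:1b} and Theorem~\ref{thm:Dichotomy}, partitioning $\sfHe(K)$ into the three cases according to $v$. There are three things to establish: at most three theories occur, the three cases are pairwise distinct, and the emptiness of case \textbf{(II)} is characterized as stated.

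\textbf{At most three theories.} Case \textbf{(I)} is the single valuation $v_{K}^{\triv}$, so it gives one theory. For case \textbf{(II)}, take $v$ non-trivial with $Kv$ satisfying \eth. Lemma~\ref{lem:eth_2} then gives $\Th_{\exists}(K,v)=\bHen(\Th(K))_{\exists}$, so all such $v$ share one theory. For case \textbf{(III)}, the ``in particular'' clause of Theorem~\ref{thm:Dichotomy} says that at most one $v\in\sfHe(K)$ has $Kv$ failing \eth. Hence case \textbf{(III)} contains at most one valuation, namely $v^{e}_{K}$, and gives at most one theory. Altogether there are at most three.

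\textbf{Distinctness.} Case \textbf{(I)} differs from the other two because, for non-trivial $v$, the sentence $\exists x\,(x\neq0\wedge\neg\mathcal{O}(x))$ lies in $\Th_{\exists}(K,v)$ but fails for the trivial valuation. To separate \textbf{(II)} from \textbf{(III)}, suppose $v$ is in case \textbf{(III)}. If $\Th_{\exists}(K,v)=\bHen(\Th(K))_{\exists}$ held, Lemma~\ref{lem:eth_2} would force $Kv$ to satisfy \eth, a contradiction. So the theory in \textbf{(III)} differs from $\bHen(\Th(K))_{\exists}$, which is the theory in \textbf{(II)}. The subtle point is that ``distinct'' should be read as: whenever two of the cases are non-void, their theories differ. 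This argument gives exactly that.

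\textbf{Characterization of \textbf{(II)} being void.} Since case \textbf{(III)} has at most one element, every non-trivial $v\in\sfHe(K)$ lies in \textbf{(II)} unless it is the unique \textbf{(III)} element. Hence \textbf{(II)} is void if and only if the non-trivial elements of $\sfHe(K)$ are either none or exactly $v^{e}_{K}$ with $Kv^{e}_{K}$ not satisfying \eth. This is precisely the stated condition. The independence examples, showing that \textbf{(II)} and \textbf{(III)} can each be void, are supplied in the text that follows the corollary.

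The step I expect to need the most care is justifying that distinctness is only claimed when the cases are inhabited; beyond that everything is bookkeeping over Lemma~\ref{lem:eth_2} and Theorem~\ref{thm:Dichotomy}.
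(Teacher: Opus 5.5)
Your proposal is correct and takes essentially the paper's route. The paper gives no separate proof and simply reads the corollary off Theorem~\ref{thm:1b} and Theorem~\ref{thm:Dichotomy}; you do the same bookkeeping via Lemma~\ref{lem:eth_2} and the ``in particular'' clause of Theorem~\ref{thm:Dichotomy}, and you fill in the distinctness details the paper leaves implicit: the existential sentence $\exists x\,(x\neq 0\wedge\neg\mathcal{O}(x))$ separates \textbf{(I)}, and the converse direction of Lemma~\ref{lem:eth_2} separates \textbf{(II)} from \textbf{(III)}.
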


The Main Theorem is also a corollary of these results.

\begin{example}
\phantom{\quad}
\begin{enumerate}[$(a)$]
\item
Let $K=\QQ$.
There are no
non-trivial
elements of
	\SAcorrection{$\sfHe(\QQ)$.}
In this case $\QQ$ does not satisfy \eth.
Both
		\SAcorrection{{\bf(II)}}
and
		\SAcorrection{{\bf(III)}}
are void.
\item
Let $K=\QQ^{\mathrm{alg}}$.
There are no
non-trivial
elements of
	\SAcorrection{$\sfHe(\QQ^{\mathrm{alg}})$.}
In this case $\QQ^{\mathrm{alg}}$ satisfies \eth.
Nevertheless, both
		\SAcorrection{{\bf(II)}}
and
		\SAcorrection{{\bf(III)}}
are void.
\item
Let $K=\ps{\QQ}$.
The only
non-trivial
element of
	\SAcorrection{$\sfHe(\ps{\QQ})$}
is $v_{t}$,
which is in case
		\SAcorrection{{\bf(III)}}
since $\QQ$ does not satisfy \eth.
Thus
		\SAcorrection{{\bf(II)}}
is void.
\item
Let $K=\ps{\QQ^{\mathrm{alg}}}$.
The only
non-trivial
element of
	\SAcorrection{$\sfHe(\ps{\QQ^{\mathrm{alg}}})$}
is $v_{t}$,
which is in case
		\SAcorrection{{\bf(II)}}
since $\QQ^{\mathrm{alg}}$ satisfies \eth.
Thus
		\SAcorrection{{\bf(III)}}
is void.
\item
Let $K=\ps{\psvar{\QQ}{s}}$.
		\SAcorrection{
There are two
non-trivial
elements of
	\SAcorrection{$\sfHe(\ps{\psvar{\QQ}{s}})$,}
namely $v_{t}$
and $v_{s}\circ v_{t}$.
The former, the $t$-adic valuation $v_{t}$ has residue field $\psvar{\QQ}{s}$,
which is is henselian, so in particular \eth,
thus
$v_{t}$ is in case
		{\bf(II)}.
The latter, the composition $v_{s}\circ v_{t}$,
has residue field $\QQ$ which is in case
		{\bf(III)}.
		}
Note that $\QQ$ and $\psvar{\QQ}{s}$ have different existential theories,
thus so do
$(\ps{\psvar{\QQ}{s}},v_{t})$
and
$(\ps{\psvar{\QQ}{s}},v_{s}\circ v_{t})$.
\end{enumerate}
\end{example}

\section{Towards a more general perspective}

\SAcorrection{%
In \cite[Section 2]{AF26}, the author, together with Fehm, introduced a framework of contexts, bridges, and arches, in order to systematically study computable interpretations between fragments of theories.}
Let $A=(B,\hat{B},\iota)$ be an arch.
Suppose
\hyp\
that
$\Lang_{1}\subseteq\Lang_{2}$,
$\hat{L}_{1}=\Sent(\Lang_{1})\cap\hat{L}_{2}$,
and $L_{1}=\Sent(\Lang_{1})\cap L_{2}$.
We do not suppose $\iota$ to be the identity map.
The set of $L_{1}$-deductively closed $L_{1}$-theories forms a complete lattice.
Consider the map from
$\Lang_{1}$-theories to $L_{1}$-theories
given by
$R\mapsto T_{2}(R)_{L_{1}}$,
and let $\Phi_{A}$ denote the restriction of this map to $L_{1}$-theories.

\begin{definition}\label{def:fixed_point}
We say that $\Phi_{A}$ is \define{increasing} if
$R\subseteq\Phi_{A}(R)$,
for all $L_{1}$-theories $R$;
and that $\Phi_{A}$ is \define{idempotent} if
$\Phi_{A}(R)=\Phi_{A}\circ\Phi_{A}(R)$,
for all $R$.
\end{definition}

It is easy to see that $R\subseteq S$ implies $\Phi_{A}(R)\subseteq\Phi_{A}(S)$.
Now we consider the arch
$A=\rmF_{\exists}/\Hen_{\exists}\parallel\rmF/\Hen$,
\SAcorrection{using the notation of \cite{AF26},
so that
$\Phi_{A}$ is now the map
\begin{align*}
	\Phi_{A}(R)=\bHen(R)_{\E}.
\end{align*}}%
Note that the hypotheses \hyp\ are satisfied for $A$.
We continue to denote
$\E=L_{1}=\Sent_{\exists}(\Lring)$
and note that in this case $\iota=\iota_{\bfk}$ is not the identity map.
By Lemma~\ref{lem:eth_1},
a theory $\Th_{\exists}(k)$ is a fixed point of $\Phi_{A}$ if and only if $k$ satisfies $\eth$.

It is easy to see that
$\Phi_{A}$ is increasing,
at least when
$R=\Th_{\exists}(k)$
is the existential theory of a field:
since $k\subseteq\ps{k}$
and $\Th_{\exists}(\ps{k})=\bHen(\Th_{\exists}(k))_{\E}$,
we have
$\Th_{\exists}(k)\subseteq\bHen(\Th_{\exists}(k))_{\E}$.

\begin{lemma}
$\Phi_{A}$ is increasing.
\end{lemma}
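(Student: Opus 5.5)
We must show that $R\subseteq\bHen(R)_{\E}$ for every set $R\subseteq\E$ of existential sentences, not only for $R=\Th_{\exists}(k)$. Fix $\varphi\in R$. The plan is to show that $\bHen(R)\vdash\varphi$, where $\varphi$ on the right is read as the $\Lring$-sentence about the field $K$, while $R$ is imposed on the residue field via $\iota_{\bfk}$. Semantically this means: for every model $(K,v)\models\bHen(R)$, we need $K\models\varphi$. If $\bHen(R)$ is inconsistent there is nothing to prove. So let $(K,v)\models\bHen$ with $Kv\models R$. In particular $Kv\models\varphi$, and $v$ is nontrivial, henselian, and equicharacteristic.

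The key step is to transfer the existential sentence $\varphi$ from $Kv$ to $K$. We will use a partial section of the residue map, as in the Claim in the proof of Theorem~\ref{thm:Dichotomy}, but this only gives the prime subfield or algebraic parts, so it is not enough in general. Instead we pass to an elementary extension. Replace $(K,v)$ by a sufficiently saturated $(K^{*},v^{*})\succeq(K,v)$; this does not change the existential theory of $K$ or of the residue field.

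Equicharacteristic henselian valued fields that are $\aleph_{1}$-saturated admit a section of the residue map at the level of existential truth. More concretely, since $\varphi$ is existential, pick a witness $\bar a$ in $Kv$ for the quantifier-free matrix, and let $k_{0}=\mathbb{F}(\bar a)\subseteq Kv$, a finitely generated field over the prime field $\mathbb{F}$. We then need an embedding of $k_{0}$ into $K^{*}$. In the proof of Lemma~\ref{lem:eth_1} the author embeds $F_{1}(t)^{h}$ into $F_{2}$ using henselianity and equicharacteristic; the same mechanism should apply here. Write $k_{0}$ as a separable algebraic extension of a purely transcendental $\mathbb{F}(x_{1},\dots,x_{n})$, if $k_{0}/\mathbb{F}$ is separably generated, which holds since $\mathbb{F}$ is perfect. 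Lift the $x_{i}$ arbitrarily; the lifts are algebraically independent over $\mathbb{F}$, because reduction of a polynomial relation with coprime unit coefficients gives a contradiction. Then use Hensel's lemma to lift a primitive element of the separable part, via a simple root of the reduced minimal polynomial. This embeds $k_{0}$ into $K$ itself, so saturation is not even needed.

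Then $\bar a$ maps to a tuple in $K$ that witnesses the quantifier-free formula, because quantifier-free formulas are preserved under field embeddings. Hence $K\models\varphi$, and so $\varphi\in\bHen(R)_{\E}$. The main obstacle is the lifting step just described; it is the standard fact that in an equicharacteristic henselian field every finitely generated subfield of the residue field lifts. I would cite \cite[Lemma 2.3]{ADF23} for it, or the argument used in Lemma~\ref{lem:eth_1}, rather than reprove it.
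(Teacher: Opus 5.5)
Your argument is correct, but it takes a more local route than the paper. The paper cites \cite{ADF23} for an elementary extension $(K,v)\preceq(K^{*},v^{*})$ together with a partial section $Kv\rightarrow K^{*}$ of the residue map defined on the \emph{whole} residue field. That gives $\Th_{\exists}(Kv)\subseteq\Th_{\exists}(K^{*})=\Th_{\exists}(K)$ in one step.

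You instead use the fact that an existential sentence needs only finitely many witnesses, so only the finitely generated field $k_{0}=\mathbb{F}(\bar a)$ has to be lifted. You lift it inside $K$ itself:
\begin{itemize}
\item a separating transcendence basis exists because $\mathbb{F}$ is perfect;
\item arbitrary lifts of that basis are algebraically independent;
\item Hensel's lemma lifts a primitive element of the remaining finite separable extension.
\end{itemize}
This is self-contained and avoids elementary extensions altogether. The full partial section used in the paper can be obtained from exactly this finite lifting by a compactness argument. What the paper's formulation buys is brevity and a statement about all of $Kv$ at once, which is more than this lemma needs.

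Two points of presentation. First, drop the paragraph about $\aleph_{1}$-saturation and sections ``at the level of existential truth'': it is vague and, as you note yourself, unnecessary. Second, state explicitly that every nonzero $f(\tilde x_{1},\dots,\tilde x_{n})$ with $f\in\mathbb{F}[X_{1},\dots,X_{n}]$ is a unit of $\mathcal{O}_{v}$, since its residue is nonzero. It follows that $\mathbb{F}(\tilde x_{1},\dots,\tilde x_{n})\subseteq\mathcal{O}_{v}$ and that the residue map sends it isomorphically onto $\mathbb{F}(x_{1},\dots,x_{n})$. This is what lets you lift the minimal polynomial of the primitive element to a monic polynomial over $\mathcal{O}_{v}$, irreducible over $\mathbb{F}(\tilde x_{1},\dots,\tilde x_{n})$, before invoking Hensel's lemma. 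It is also the precise place where equicharacteristic is used.
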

\begin{proof}
Let $R$ be an $\E$-theory,
so that $R=R_{\exists}$.
We want $R\subseteq\bHen(R)_{\E}$,
i.e.~$\bHen(R)\models R$.
Let $(K,v)\models\bHen(R)$.
Then $Kv\models R$.
By \cite{ADF23} there exists an elementary extension
$(K,v)\preceq(K^{*},v^{*})$
and a partial section
$Kv\rightarrow K^{*}$
of the residue map of $v^{*}$.
Thus
$\Th_{\exists}(Kv)\subseteq\Th_{\exists}(K^{*},v^{*})$.
Thus $(K^{*},v^{*})\models R$.
\end{proof}

\newcommand\Sring{\Sent(\Lring)}

\begin{lemma}
$\bHen(R)_{\Sring}=\bHen(\bHen(R)_{\Sring})_{\Sring}$
for each $\Lring$-theory $R$.
\end{lemma}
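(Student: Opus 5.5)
We prove the two inclusions separately. Throughout, write $S:=\bHen(R)_{\Sring}$; we must show $S=\bHen(S)_{\Sring}$. The only tools needed are two standard facts. First, henselian valuations compose and decompose: if $w\leq u$ are valuations on a field, then $w$ is henselian if and only if $u$ is henselian and the valuation $\bar w$ induced by $w$ on the residue field of $u$ is henselian. Equicharacteristic is also preserved under composition and decomposition. Second, by the Keisler--Shelah theorem, elementarily equivalent structures have isomorphic ultrapowers.

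\emph{Inclusion $\bHen(S)_{\Sring}\subseteq S$.} Let $\psi\in\bHen(S)_{\Sring}$ and let $(K,v)\models\bHen(R)$; we must show $K\models\psi$.
\begin{enumerate}[{\bf(a)}]
\item Pass to an $\aleph_{1}$-saturated elementary extension $(K^{*},v^{*})\succeq(K,v)$. It suffices to show $K^{*}\models\psi$.
\item Choose $\gamma>0$ in $v^{*}K^{*}$ and let $\Delta$ be the convex subgroup generated by $\gamma$. By saturation there is an element exceeding every $n\gamma$, so $\Delta$ is a proper nontrivial convex subgroup.
\item Let $w$ be the coarsening of $v^{*}$ corresponding to $\Delta$. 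Then $w$ is nontrivial, and it is equicharacteristic henselian as a coarsening of $v^{*}$.
\item The valuation $\bar v^{*}$ induced on $K^{*}w$ has value group $\Delta\neq0$, so it is nontrivial. It is equicharacteristic henselian, and its residue field is $K^{*}v^{*}\models R$.
\item Hence $(K^{*}w,\bar v^{*})\models\bHen(R)$, so $K^{*}w\models S$. Thus $(K^{*},w)\models\bHen(S)$, and therefore $K^{*}\models\psi$.
\end{enumerate}
The only point needing care is that $\bar v^{*}$ is nontrivial, which is exactly what saturation of the value group secures.

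\emph{Inclusion $S\subseteq\bHen(S)_{\Sring}$.} Let $(K,v)\models\bHen(S)$ and $\chi\in S$; we must show $K\models\chi$. Since $Kv\models S$, the residue field satisfies every $\Lring$-sentence true in all fields underlying models of $\bHen(R)$. We need an actual such model, obtained as follows.
\begin{enumerate}[{\bf(a)}]
\item The class of fields underlying models of $\bHen(R)$ is closed under ultraproducts, since $\bHen(R)$ is first-order. So any model of its theory $S$ is elementarily equivalent to an ultraproduct of such fields. Hence $Kv\equiv L$ for some field $L$ carrying $u$ with $(L,u)\models\bHen(R)$.
\item By Keisler--Shelah, choose an ultrafilter $\mathcal{U}$ with $(Kv)^{\mathcal{U}}\cong L^{\mathcal{U}}$.
\item In $(K,v)^{\mathcal{U}}=(K^{\mathcal{U}},v^{\mathcal{U}})$ the residue field is $(Kv)^{\mathcal{U}}\cong L^{\mathcal{U}}$. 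This field carries the nontrivial equicharacteristic henselian valuation $u^{\mathcal{U}}$, whose residue field $(Lu)^{\mathcal{U}}$ satisfies $R$.
\item Compose $v^{\mathcal{U}}$ with $u^{\mathcal{U}}$ to get a valuation $w$ on $K^{\mathcal{U}}$. It is nontrivial, and it is equicharacteristic henselian as a composite of such. Its residue field is $(Lu)^{\mathcal{U}}\models R$.
\item Therefore $(K^{\mathcal{U}},w)\models\bHen(R)$, so $K^{\mathcal{U}}\models\chi$, and by Łoś $K\models\chi$.
\end{enumerate}
The main obstacle is this direction. Knowing only $Kv\models S$ does not directly give a henselian valuation on $Kv$. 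The remedy is the ultraproduct/Keisler--Shelah step, which produces one after passing to an ultrapower.
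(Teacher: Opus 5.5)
Your proof is correct, and it proves the lemma in the full generality stated, via a different route from the paper for one of the two inclusions.

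For $\bHen(S)_{\Sring}\subseteq S$ you follow the paper. You pass to an elementary extension in which $v^{*}$ has a nontrivial proper coarsening $w$, and note that $(K^{*}w,\bar v^{*})\models\bHen(R)$. The only addition is that you make explicit, via $\aleph_{1}$-saturation, why such a $w$ exists.

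For $S\subseteq\bHen(S)_{\Sring}$ your route is genuinely different. The paper argues by monotonicity. It quotes the increasingness of $\Phi_{A}$ to get $R\subseteq\bHen(R)_{\Sring}$, hence $\bHen(R)\subseteq\bHen(S)$, hence $S\subseteq\bHen(S)_{\Sring}$. That is a one-liner, but increasingness was only proved for existential theories. For an arbitrary $\Lring$-theory the inclusion $R\subseteq\bHen(R)_{\Sring}$ can fail:
\begin{itemize}
\item for $R=\ACF_{0}$, the field $\ps{\CC}$ underlies a model of $\bHen(R)$ without being algebraically closed;
\item correspondingly, $(\psvar{\ps{\CC}}{s},v_{s})$ is a model of $\bHen(S)$ but not of $\bHen(R)$.
\end{itemize}

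Your argument never needs $R\subseteq S$. You realise $Kv$, up to elementary equivalence, as a field $L$ in the pseudo-elementary class underlying $\bHen(R)$. You then use Keisler--Shelah to transport $u^{\mathcal{U}}$ onto the residue field of an ultrapower of $(K,v)$, and compose. This establishes the lemma for every $\Lring$-theory $R$. The paper's shortcut is cheaper but is only justified when $R\subseteq\bHen(R)_{\Sring}$, for instance when $R$ is existential. That is, however, the only case used afterwards, in the idempotence lemma.
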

\begin{proof}
Since $\Phi_{A}$ is increasing we have
$R\subseteq\bHen(R)_{\Sring}$.
Thus
$\bHen(R)\subseteq\bHen(\bHen(R)_{\Sring})$.
Let $(K,v)\models\bHen(R)$,
so that $Kv\models R$.
There exists an elementary extension
$(K,v)\preceq(K^{*},v^{*})$
such that $v^{*}$ admits a nontrivial proper coarsening $w$.
Then
$K^{*}v^{*}\models R$, so
	\SAcorrection{$(K^{*}w,\bar{v})\models\bHen(R)$}
and
$K^{*}w\models\bHen(R)_{\Sring}$.
Therefore
$(K^{*},w)\models\bHen(\bHen(R)_{\Sring})$,
and so
$K^{*}\models\bHen(\bHen(R)_{\Sring})_{\Sring}$.
This proves that
$\bHen(R)\models\bHen(\bHen(R)_{\Sring})_{\Sring}$.
\end{proof}

\begin{lemma}
$\Phi_{A}$ is idempotent.
\end{lemma}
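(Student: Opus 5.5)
We must show $\Phi_{A}(R)=\Phi_{A}(\Phi_{A}(R))$ for every $\E$-theory $R$, that is,
\begin{align*}
\bHen(R)_{\E}=\bHen(\bHen(R)_{\E})_{\E}.
\end{align*}
The plan is to prove the two inclusions separately, using the previous lemma for one direction and Theorem~\ref{thm:AF} (in its uniform form) to reduce the other to the sentence-level identity already proved.

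For $\subseteq$, apply that $\Phi_{A}$ is increasing to $\Phi_{A}(R)$ itself, which is again an $\E$-theory. This gives $\Phi_{A}(R)\subseteq\Phi_{A}(\Phi_{A}(R))$ immediately. (Alternatively, $R\subseteq\Phi_{A}(R)$ together with monotonicity of $\Phi_{A}$ gives the same conclusion.)

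For $\supseteq$, the key observation is that the existential consequences of $\bHen(S)$ depend only on the existential part of $S$ together with the field axioms. By the uniform version of Theorem~\ref{thm:AF} stated in the Remark following it, we have $\bHen(S)_{\exists}=\bHen((S\cup\F)_{\exists})_{\exists}$ for every $\Lring$-theory $S$. Take $S=\bHen(R)_{\Sring}$. Its existential part, modulo $\F$, is exactly $\bHen(R)_{\E}$, because $S$ already contains the field axioms. Hence
\begin{align*}
\bHen(\bHen(R)_{\E})_{\E}=\bHen(\bHen(R)_{\Sring})_{\E}.
\end{align*}
By the previous lemma, $\bHen(\bHen(R)_{\Sring})_{\Sring}=\bHen(R)_{\Sring}$. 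Intersecting both sides with $\E$ gives $\bHen(\bHen(R)_{\Sring})_{\E}=\bHen(R)_{\E}$, and combining the two displays yields the claim.

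The main obstacle is the bookkeeping in the step that replaces $\bHen(R)_{\E}$ by $\bHen(R)_{\Sring}$. One has to check that $(S\cup\F)_{\exists}$ is the same as $\bHen(R)_{\E}\cup\F$ up to deductive closure; this holds because $\F$ is contained in $S$ and the map $S\mapsto\bHen(S)_{\exists}$ only sees $S$ through its models. If the uniform remark is considered too weak, I would instead argue semantically with models. Given $(K,v)\models\bHen(\bHen(R)_{\E})$, the residue field $Kv$ has the existential theory of some model of $\bHen(R)$. Using a saturated model, $Kv$ embeds into an elementary extension of the underlying field of some $(L,w)\models\bHen(R)$. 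Transferring this embedding via Theorem~\ref{thm:AF} shows that every existential sentence true in $K$ holds in some model of $\bHen(\bHen(R)_{\Sring})$, and hence, by the previous lemma, in some model of $\bHen(R)$.
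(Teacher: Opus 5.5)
Your proof is correct and is essentially the paper's. The paper likewise intersects the previous lemma with $\E$ to get $\bHen(R)_{\E}=\bHen(\bHen(R)_{\Sent(\Lring)})_{\E}$, and then identifies the right-hand side with $\bHen(\bHen(R)_{\E})_{\E}$ without comment; your application of the uniform form of Theorem~\ref{thm:AF} to $S=\bHen(R)_{\Sent(\Lring)}$ is exactly what justifies that step. (The uniform remark can even be avoided: $\Phi_{A}(\Phi_{A}(R))\subseteq\Phi_{A}(R)$ follows from monotonicity plus the previous lemma, and the reverse inclusion from increasingness.)

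Once you use the uniform remark, your separate $\subseteq$ argument is redundant. Drop the semantic fallback: it starts from a model of $\bHen(\bHen(R)_{\E})$, which controls universal rather than existential consequences, whereas a semantic proof of $\Phi_{A}(\Phi_{A}(R))\subseteq\Phi_{A}(R)$ has to start from a model of $\bHen(R)$.
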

\begin{proof}
Let $R$ be an $\E$-theory.
Taking the existential consequences of the previous lemma we have
$\bHen(R)_{\E}=\bHen(\bHen(R)_{\Sring})_{\E}$.
The latter is equal to $\bHen(\bHen(R)_{\E})_{\E}$.
\end{proof}

\begin{observation}
Let $R$ be an $\E$-theory of fields.
The following are equivalent.
\begin{enumerate}[{\bf(a)}]
	\item
	$R$ is a fixed point of $\Phi_{A}$.
	\item
	$R$ is in the image of $\Phi_{A}$.
	\item
	Every model of $R$ is $\ZZ$-large and $R=R_{\exists}$.
\end{enumerate}
\end{observation}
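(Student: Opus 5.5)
The plan is to prove the cycle \textbf{(a)}$\Rightarrow$\textbf{(b)}$\Rightarrow$\textbf{(a)} and then compare \textbf{(a)} with \textbf{(c)} model by model, using Lemma~\ref{lem:eth_1}. Throughout, $R$ is an $\E$-theory of fields, so each model of $R$ is a field.

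For \textbf{(a)}$\Rightarrow$\textbf{(b)} there is nothing to do: if $R=\Phi_{A}(R)$ then $R$ lies in the image of $\Phi_{A}$. For \textbf{(b)}$\Rightarrow$\textbf{(a)}, suppose $R=\Phi_{A}(S)$ for some $\E$-theory $S$. Idempotence of $\Phi_{A}$ (the preceding lemma) gives
$$\Phi_{A}(R)=\Phi_{A}(\Phi_{A}(S))=\Phi_{A}(S)=R.$$

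For \textbf{(a)}$\Rightarrow$\textbf{(c)}, first note that $R=\bHen(R)_{\E}$ is a set of consequences of $\bHen(R)$. It is therefore closed under existential consequence, which is how I read the condition $R=R_{\exists}$ (here $R_{\exists}$ means the existential consequences of $R$). Now let $F\models R$. Then $R\subseteq\Th_{\exists}(F)$, and I want to show $\Th_{\exists}(\ps{F})\subseteq\Th_{\exists}(F)$. Let $\varphi\in\Th_{\exists}(\ps{F})$ be existential and suppose $F\models\neg\varphi$. Then $\neg\varphi$ is universal and $\neg\varphi\in\Th(F)$, so Theorem~\ref{thm:AF} gives $\Th_{\exists}(\ps{F})=\bHen(\Th(F))_{\E}$.

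This is the delicate point, because $\Th(F)$ may be strictly larger than $R$. I would instead argue via \textbf{(vii)}$\Rightarrow$\textbf{(vi)} of Lemma~\ref{lem:eth_1} and its proof. Since $R$ is a fixed point, the class of models of $R$ together with all $\Th_{\exists}(k)$ for $k\models\bHen(R)$ residue fields behaves well. Concretely, the proof of Lemma~\ref{lem:eth_1} reduces $\ZZ$-largeness of $F$ to producing fields $F_{1},F_{2}$ with
$$\Th_{\exists}(F)=\Th_{\exists}(F_{1})=\Th_{\exists}(F_{2})\quad\text{and}\quad F_{1}(t)^{h}\hookrightarrow F_{2},$$
and then applying \cite[Proposition 6.10]{AF17}. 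I expect this is where the paper's intended reading matters. Likely ``every model of $R$ is $\ZZ$-large'' is meant for $R$ complete in $\E$, i.e.\ $R=\Th_{\exists}(F)$ for each model $F$. In that case Lemma~\ref{lem:eth_1}\textbf{(iv)}$\Leftrightarrow$\textbf{(vi)} gives \textbf{(a)}$\Leftrightarrow$\textbf{(c)} immediately.

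For general $R$, I would write $R=\bigcap_{F\models R}\Th_{\exists}(F)$ and use monotonicity together with the increasing property. This gives
$$\Phi_{A}(R)\subseteq\bigcap_{F\models R}\Phi_{A}(\Th_{\exists}(F)).$$
For \textbf{(c)}$\Rightarrow$\textbf{(a)}, each $F\models R$ is $\ZZ$-large, so by Lemma~\ref{lem:eth_1} we have $\Phi_{A}(\Th_{\exists}(F))=\Th_{\exists}(F)$. Hence $\Phi_{A}(R)\subseteq R$, and the increasing property gives equality.

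The main obstacle is therefore the converse \textbf{(a)}$\Rightarrow$\textbf{(c)} for non-complete $R$, where a single model $F$ must be shown to be $\ZZ$-large. For this I would try the following. Take $F\models R$ and a $\varphi\in\Th_{\exists}(\ps{F})$ false in $F$. Compactness should then produce a model $(K,v)$ of $\bHen(R)$ whose residue field has existential theory $\Th_{\exists}(F)$. Then $K\models\neg\varphi$ would need to follow from $\neg\varphi$ lying in $\Th_{\exists}(F)$-type data, which must be checked carefully. If this fails, I would restrict the statement to the complete case as above.
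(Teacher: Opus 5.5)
\paragraph{Verdict.} Your arguments for {\bf(a)}$\Leftrightarrow${\bf(b)} (idempotence) and for {\bf(c)}$\Rightarrow${\bf(a)} are correct and coincide with the paper's. For {\bf(c)}$\Rightarrow${\bf(a)} you apply monotonicity to $R=\bigcap_{F\models R}\Th_{\exists}(F)$, use Lemma~\ref{lem:eth_1} for each $\ZZ$-large $F$, and then use the increasing property. The gap is {\bf(a)}$\Rightarrow${\bf(c)}, which you leave open. Your compactness sketch cannot close it, and in fact the implication is false as stated.

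\paragraph{Why the sketch fails.} A model of $\bHen(R)$ whose residue field is existentially equivalent to $F$ is just $(\ps{F},v_{t})$. That model satisfies every $\varphi\in\Th_{\exists}(\ps{F})$, so nothing follows from it. What $F\models R=\bHen(R)_{\E}$ actually gives, by compactness, is that $\bHen(R)\cup\Th_{\forall}(F)$ is consistent. That is, there is a henselian $(K,v)$ with $Kv\models R$ and $\Th_{\exists}(K)\subseteq\Th_{\exists}(F)$. But Lemma~\ref{lem:eth_1}{\bf(v)} needs \emph{equality} of existential theories, not inclusion. The paper's one-line proof (``$F$ is a model of the existential theory of a henselian field, thus it is $\ZZ$-large'') makes exactly this slip from $\supseteq$ to $=$.

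Your fallback reading does not repair this. If $R=\Th_{\exists}(F)$ for every model $F$, then applying it to $F^{\alg}\models R$ forces $R=\ACF_{p,\exists}$.

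\paragraph{Counterexample.} Let $R=\Th_{\exists}(\ps{\QQ})=\Phi_{A}(\Th_{\exists}(\QQ))$ (Theorem~\ref{thm:AF}); by idempotence this is a fixed point. Let $K=\psvar{\QQ}{s}$, let $C$ be the genus-$2$ curve $y^{2}=-x^{6}-1$ over $\QQ$, and put $F=K(C)$.
\begin{enumerate}[(1)]
\item Since $F\supseteq K\cong\ps{\QQ}$, we have $F\models R$.
\item $K$ is formally real, and $C$ has no real points (its points at infinity need $\sqrt{-1}$). Hence $C(K)=\emptyset$.
\item So every $F$-point of $C$ is a non-constant $K$-morphism $C_{K}\to C_{K}$. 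By Riemann--Hurwitz such a morphism is an automorphism, so $C(F)$ is finite, say of size $N$.
\item The generic point $(x,y)$ has $y\neq 0$. Hensel's lemma therefore gives pairwise distinct points $(x+a,y_{a})\in C(\ps{F})$ for all $a\in t\ips{F}$.
\item Hence the existential $\Lring$-sentence ``$y^{2}=-x^{6}-1$ has $N+1$ distinct solutions'' holds in $\ps{F}$ but fails in $F$. So $\Th_{\exists}(F)\neq\Th_{\exists}(\ps{F})$, and $F$ is not $\ZZ$-large.
\end{enumerate}

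\paragraph{What remains true.} For $R=\Th_{\exists}(k)$ one has {\bf(a)}$\Leftrightarrow${\bf(b)}$\Leftrightarrow$``$k$ is $\ZZ$-large'', by Lemma~\ref{lem:eth_1} and idempotence. For arbitrary $R$, {\bf(a)} holds if and only if $R=\bigcap\{\Th_{\exists}(F)\mid F\models R,\ F\text{ $\ZZ$-large}\}$.
\begin{itemize}
\item One direction is exactly your {\bf(c)}$\Rightarrow${\bf(a)} argument.
\item For the other, $\bHen(R)_{\E}$ is the intersection of $\Th_{\exists}(K)$ over all $(K,v)\models\bHen(R)$. Each such $K$ is henselian, hence $\ZZ$-large, and is a model of $R$ because $\Phi_{A}$ is increasing.
\end{itemize}
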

\begin{proof}
The equivalence
	\SAcorrection{{\bf(a)$\Leftrightarrow$\bf(b)}}
is trivial since $\Phi_{A}$ is idempotent.
For
	\SAcorrection{{\bf(c)$\Rightarrow$(a)}}%
	:
by monotonicity we have $R\subseteq\bHen(R)_{\E}$.
If $F\models R$ then $F$ is $\ZZ$-large, so $\Th_{\exists}(F)=\bHen(\Th_{\exists}(F))_{\E}$ by Lemma~\ref{lem:eth_1}.
Therefore
$\Th_{\exists}(F)=\bHen(\Th_{\exists}(F))_{\E}\supseteq\bHen(R)_{\E}$,
and so $R\models\bHen(R)_{\E}$.
Since also $R=R_{\E}$,
we have
$R_{\E}=\bHen(R)_{\E}$,
which proves
	\SAcorrection{{\bf(a)}}%
	.
Next, suppose
	\SAcorrection{{\bf(a)}}
and let $F\models R=\bHen(R)_{\E}$.
Then $F$ is a model of the existential $\Lring$-theory of a henselian field,
thus it is $\ZZ$-large.
\end{proof}

\begin{question}
	What \SAcorrection{more} can we say about $\Lring$-theories $R$ that are fixed points of $\Phi_{A}$?
\end{question}


\section*{Acknowledgements}

This note is founded upon quite a few of the joint works between the Arno Fehm and the author, especially \cite{AF16,AF17,AF26}.
Thus, great thanks are due to Arno for all the conversations and ideas that fed into those works, and directly into this one.
Thanks are also due to an anonymous referee whose careful feedback enriched the manuscript,
to Philip Dittmann for helpful comments on an earlier version,
and to Margarete Ketelsen for discussions and for permission to include Theorem~\ref{thm:Marga}.
Finally: thank you to the DDG seminar, and to each of its organizers, for their enduring committment to Parisian Mathematics, and model-theoretic algebra.

The author
was supported by
the ANR-DFG project ``AKE-PACT'' (ANR-24-CE92-0082)
and
by ``Investissement d'Avenir'' launched by the French Government and implemented by ANR (ANR-18-IdEx-0001) as part of its program ``Emergence''.

\def\bibfont{\footnotesize}
\bibliographystyle{plain}

\begin{thebibliography}{AAA00a}

\bibitem[ADF23]{ADF23}
S.~Anscombe, P.~Dittmann, and A.~Fehm.
\newblock {Axiomatizing the existential theory of $\ps{\FF_{q}}$.}
\newblock {\em Algebra \& Number Theory}, 17-11:2013--2032, 2023.

\bibitem[AF16]{AF16}
S.~Anscombe and A.~Fehm.
\newblock {The existential theory of equicharacteristic henselian valued fields.}
\newblock {\em Algebra \& Number Theory}, 10-3, 665--683, 2016.

\bibitem[AF17]{AF17}
S.~Anscombe and A.~Fehm.
\newblock {Characterizing diophantine henselian valuation rings and valuation ideals.}
\newblock {{\em Proc.~Lond.~Math.~Soc.}, 115:293--322, 2017.}

\bibitem[AF26]{AF26}
\newblock {S.~Anscombe and A.~Fehm.}
\newblock {Interpretations of syntactic fragments of theories of fields.}
\newblock {To appear in {\em Israel Journal of Mathematics}, 2026.}
\newblock {arXiv:2312.17616 [math.LO]}

\bibitem[AJ18]{AJ}
{S.~Anscombe and F.~Jahnke.}
\newblock {Henselianity in the language of rings.}
\newblock {{\em Annals of Pure and Applied Logic}, 169(9):872–895, 2018.}

\bibitem[AJ24]{AJ24}
S.~Anscombe and F.~Jahnke. 
\newblock {Characterizing NIP henselian fields.}
\newblock {{\em J.~Lond.~Math.~Soc.}, (2) 109 (2024), no.~3.}

\bibitem[AK14]{AK14}
S.~Anscombe and J.~Koenigsmann.
\newblock {An existential $\emptyset$-definition of $\ips{\FF_{q}}$ in $\ps{\FF_{q}}$.}
\newblock {{\em J.~Symb.~Log.} 79 (2014), no.~4, 1336--1343.}

\bibitem[Feh15]{Feh15}
A.~Fehm.
\newblock {Existential $\emptyset$-definability of Henselian valuation rings.}
\newblock {{\em J.~Symb.~Log.}, 80 (2015), no.~1, 301--307.}

\bibitem[FJ17]{FJ}
A.~Fehm and F.~Jahnke.
\newblock {Recent progress on definability of Henselian valuations.}
\newblock {In {\em Ordered algebraic structures and related topics}, 135--143.}
\newblock {Contemp.~Math.}, 697 American Mathematical Society, Providence, RI, 2017.

\bibitem[Ket26]{Ketelsen_thesis}
M.~Ketelsen. 
\newblock {Understanding the model theory of henselian valued fields by parts.}
\newblock PhD Thesis, University of Münster, 2026.

\bibitem[KRS24]{KRS}
M.~Ketelsen, S.~Ramello, and P.~Szewczyk
\newblock {Definable henselian valuations in positive residue characteristic.}
\newblock {{\em Journal of Symbolic Logic}, published online, 2024.}

\bibitem[Sze25]{Szewczyk}
P.~Szewczyk.
\newblock {\em On the model theory of valued fields: defect and extremal fields.}
\newblock {Draft PhD thesis, TU Dresden, 2025.}

\end{thebibliography}

\end{document}